\DeclareMathOperator{\GL}{GL}
\DeclareMathOperator{\GSp}{GSp}
\DeclareMathOperator{\SL}{SL}
\DeclareMathOperator{\SO}{SO}
\DeclareMathOperator{\GU}{GU}
\DeclareMathOperator{\pr}{pr}
\DeclareMathOperator{\Res}{Res}
\DeclareMathOperator{\ord}{ord}
\renewcommand{\AA}{\mathbf{A}}
\newcommand{\ZZ}{\mathbf{Z}}
\newcommand{\RR}{\mathbf{R}}
\newcommand{\QQ}{\mathbf{Q}}
\newcommand{\Zp}{\ZZ_p}
\newcommand{\Gm}{\mathbf{G}_m}
\newcommand{\fg}{\mathfrak{g}}
\newcommand{\fh}{\mathfrak{h}}
\newcommand{\fsl}{\mathfrak{sl}}
\newcommand{\so}{\mathfrak{so}}
\newcommand{\fsp}{\mathfrak{sp}}
\newcommand{\cG}{\mathcal{G}}
\newcommand{\cH}{\mathcal{H}}
\newcommand{\cO}{\mathcal{O}}
\newcommand{\cX}{\mathcal{X}}
\newcommand{\cY}{\mathcal{Y}}
\newcommand{\Mi}{M_{\Iw}}
\newcommand{\Iw}{\mathrm{Iw}}
\newcommand{\et}{\text{\textup{\'et}}}
\newcommand{\into}{\hookrightarrow}
\renewcommand{\ge}{\geqslant}
\newcommand{\teq}{\trianglelefteqslant}
\newcommand{\stbt}[4]{
 \left(\begin{smallmatrix}#1 & #2 \\ #3 & #4\end{smallmatrix}\right)
}
\newtheorem{definition}{Definition}[subsection]
\newtheorem{theorem}[definition]{Theorem}
\newtheorem{proposition}[definition]{Proposition}
\newtheorem{lemma}[definition]{Lemma}
\newtheorem*{hyp*}{Hypothesis}
\theoremstyle{remark}
\newtheorem{remark}[definition]{Remark}
\newtheorem*{notation}{Notation}
\begin{document}

 \title[Spherical varieties and norm relations]{Spherical varieties and norm relations\\ in Iwasawa theory}
 \author{David Loeffler}
 \address{David Loeffler\\
  Mathematics Institute, University of Warwick, Coventry CV4 7AL, UK.\\
 \emph{ORCID}: \href{http://orcid.org/0000-0001-9069-1877}{\texttt{0000-0001-9069-1877}}}
 \email{d.a.loeffler@warwick.ac.uk}
 \urladdr{\url{http://warwick.ac.uk/fac/sci/maths/people/staff/david_loeffler/}}

 \subjclass[2010]{11F67, 11R23, 14M17}

 \keywords{Euler systems, norm relations, spherical varieties}

 \thanks{Supported by a Royal Society University Research Fellowship.}

 \maketitle

% \begin{resume}
%  Les familles des classes cohomologiques qui sont compatible sous l'application de norme, definies pour les vari\'et\'es de Shimura et d'autres espaces arithm\'etiques symmetriques, jouent un role important dans  la theorie d'Iwasawa des formes automorphes. Dans cette note, nous d\'eveloppons un approche systematique pour prouver les compatibilit\'es de norme ``verticales'' (c'est-a-dire le niveau des vari\'et\'es ne change que pour un prime fixe p), aussi bien pour le cohomologie Betti que pour la cohomologie \'etale. Cela r\'ev\'ele une relation inattendue avec la theorie des varietes spheriques. Les techniques peuvent etre utilis\'es pour construire des examples nouveaux de telles familles, menant potentiellement aus nouvelles constructions des syst\`emes d'Euler aussi bien que des fonctions $L$ $p$-adiques: par example, nous obtenons des familles anti-cyclotomiques des cycles algebriques sur les vari\'et\'es de Shimura pour les groupes $U(n) \times U(n+1)$ et $U(2n)$.
% \end{resume}

 \begin{abstract}
  Norm-compatible families of cohomology classes for Shimura varieties, and other arithmetic symmetric spaces, play an important role in Iwasawa theory of automorphic forms. The aim of this note is to give a systematic approach to proving ``vertical'' norm-compatibility relations for such classes (where the level varies at a fixed prime $p$), treating the case of Betti and \'etale cohomology at once, and revealing an unexpected relation to the theory of spherical varieties. This machinery can be used to construct many new examples of norm-compatible families, potentially giving rise to new constructions of both Euler systems and $p$-adic $L$-functions: examples include families of algebraic cycles on Shimura varieties for $U(n) \times U(n+1)$ and $U(2n)$ over the $p$-adic anticyclotomic tower.
 \end{abstract}

 \section{Introduction}

  \subsection{Setting} The goal of this paper is to study norm-compatible families of cohomology classes attached to arithmetic symmetric spaces. Perhaps the simplest non-trivial example is the \emph{Mazur--Tate elements}, appearing in the theory of modular symbols for $\GL_2 / \QQ$. These are the elements defined by
  \[ \Theta_{m, N} \coloneqq \sum_{a \in (\ZZ / m\ZZ)^\times} [a] \otimes \{ \tfrac{a}{m} \to i\infty\}\ \ \in\ \  \ZZ[(\ZZ / m)^\times] \otimes_{\ZZ} H^1(Y_1(N), \ZZ), \]
  where $m, N \ge 1$ are integers, $Y_1(N)$ is the modular curve of level $\Gamma_1(N)$, and $\{ \tfrac{a}{m} \to i\infty\}$ denotes the image in $Y_1(N)$ of a path from $\tfrac{a}{m}$ to $i \infty$ in the complex upper half-plane. These Mazur--Tate elements satisfy the following crucial \emph{vertical norm relation}\footnote{There is also a ``horizontal'' norm relation, relating $\Theta_{\ell m, N}$ and $\Theta_{m, N}$ where $\ell$ is a prime not dividing $mN$, but we shall not discuss horizontal norm relations here.}: if $p$ is a prime dividing $N$, and $r \ge 1$, then
  \begin{equation}
   \label{eq:msnorm}
  \operatorname{norm}_{p^r}^{p^{r+1}}\left(\Theta_{p^{r+1}, N}\right)= U_p' \cdot \Theta_{p^r, N},
  \end{equation}
  where ``norm'' denotes the projection $\ZZ[(\ZZ / p^{r + 1})^\times] \to \ZZ[(\ZZ / p^r)^\times]$, and $U_p'$ is the transpose, with respect to Poincar\'e duality, of the usual $U_p$ operator.  This norm-compatibility relation is the crucial input in constructing the $p$-adic $L$-function of a weight 2 modular form. These elements also satisfy a norm-compatibility property in $N$, which is the input needed to extend the $p$-adic $L$-function to a 2-variable $p$-adic $L$-function for a Hida (or more generally Coleman) families of modular forms.

  A second, apparently rather different, setting in which ``norm-compat\-ibility'' problems arise is the theory of Euler systems. The \emph{Beilinson--Flach elements}, defined in \cite{LLZ}, are classes
  \begin{equation}
   \label{eq:bfnorm}
   \mathcal{BF}_{m, N} \in H^3_\mathrm{mot}\Big( (Y_1(N) \times Y_1(N))_{\QQ(\mu_m)}, \ZZ(2) \Big),
  \end{equation}
  where ``$\mathrm{mot}$'' denotes motivic cohomology. These turn out to satisfy norm-compatibility relations (in both $m$ and $N$) which are formally very similar to those of the Mazur--Tate elements; and these are crucial in applications of the Beilinson--Flach elements to Iwasawa theory and the Bloch--Kato conjecture.

  \subsection{Results of the paper} In this note, we develop a general formalism for proving vertical norm-compatibility relations for families of cohomology classes built up by pushing forward cohomology from a ``small'' reductive group $H$ to a ``large'' one $G$. The basic condition we need is that some subgroup of $H$ (depending on the setting) should act with an open orbit on a flag variety for $G$, and that the stabiliser of a point in this orbit should be as small as possible. This links our approach with the theory of \emph{spherical varieties}, which are precisely those $G$-varieties $G/H$ in which $H$ has an open orbit on the Borel flag variety of $G$.

  For instance, we can intepret the norm-compatibility \eqref{eq:msnorm} as a consequence of the fact that the subgroup $\stbt \star {}{} 1 \subseteq \GL_2$ has an open orbit on $\mathbf{P}^1$ with trivial stabiliser; and we can interpret \eqref{eq:bfnorm} as a consequence of the more subtle statement that the subgroup $\stbt * * {} 1 \subseteq \GL_2$, embedded diagonally inside $\GL_2 \times \GL_2$, has an open orbit on $\mathbf{P}^1 \times \mathbf{P}^1$ with trivial stabiliser.

  Our construction is entirely local at $p$, and applies to any cohomology theory satisfying a list of straightforward properties. This gives simple, uniform proofs of a wide range of norm-compatibility statements appearing in the literature on $p$-adic $L$-functions and Euler systems. More importantly, it also gives rise to many new results.

  One special case of our construction is the following theorem. Let $\cG$ be a reductive group over $\QQ$, and $\cH \subseteq \cG$ a reductive subgroup, equipped with compatible Hodge-type Shimura data. For simplicity, we assume that the centres of $\cG$ and $\cH$ have no isogeny factor which is $\RR$-split but not $\QQ$-split. Let $\mathcal{C}$ be the maximal torus quotient of $\cH$, and $E$ the reflex field of the Shimura datum for $\cH$, so the action of Galois on the connected components of the Shimura variety $\cY_{\cH}$ is given by the composite of the Artin reciprocity map for $E$ with a homomorphism $\Res_{E/\QQ}(\mathbf{G}_m) \to \mathcal{C}$. Let $G, H, C$ denote the base-extensions of $\cG, \cH, \mathcal{C}$ to $\QQ_p$, for some prime $p$ such that $G,H,C$ are unramified.

  \begin{theorem}
   \label{thm:cycles}
   Suppose that there exists a parabolic subgroup $Q_G$ of $G$, with opposite $\overline{Q}_G$, and a point $u \in (G / \overline{Q}_G)(\QQ_p)$, such that:
   \begin{itemize}
    \item the $H$-orbit of $u$ is Zariski-open in $G / \overline{Q}_G$;
    \item the image in $C$ of the subgroup $H \cap u \overline{Q}_G u^{-1}$ (the $H$-stabiliser of $u$) is a proper subgroup $C^0 \subset C$.
   \end{itemize}
   Then the $Q_G$-ordinary projections of cycle classes of $\cY_{\cH}$ in $\cY_{\cG}$ interpolate into an Iwasawa cohomology class over the abelian $p$-adic Lie extension of $E$ corresponding to $C^0$.
  \end{theorem}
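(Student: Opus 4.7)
The plan is to reduce the statement to a purely local computation at $p$, exploit the open-orbit hypothesis via an Iwahori-type decomposition to evaluate the trace in the tower, and then re-interpret the resulting norm-compatibility globally via Shimura reciprocity.

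First, I would fix compatible away-from-$p$ levels $K^p \subseteq \cG(\AA_f^p)$ and $K_H^p \subseteq \cH(\AA_f^p)$, and introduce a descending tower $\{K_r\}_{r \ge 0}$ of open compact subgroups of $\cG(\QQ_p)$ starting from a hyperspecial $K_0$, with $K_r$ taken to be the preimage of $\overline{Q}_G(\ZZ/p^r\ZZ)$ under reduction modulo $p^r$ (so that $K_r$ is a ``depth-$r$'' refinement with respect to $Q_G$). Writing $\cY_{\cG,r}$ for the Shimura variety of level $K^p K_r$ and $\cY_{\cH,r}$ for its $\cH$-analogue, the cycle-class construction produces, for every $r$, a class $z_r$ given by the pushforward of the fundamental class of $\cY_{\cH,r}$ into an appropriate cohomology group of $\cY_{\cG,r}$. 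The theorem amounts to showing that, after applying the $Q_G$-ordinary idempotent $e_{Q_G} = \lim_{n \to \infty} U_{Q_G}^{n!}$, the pushforward $(\pi_{r+1,r})_{\ast} z_{r+1}$ recovers $z_r$, up to the natural action indexed by the Galois group of components.

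The heart of the argument is the evaluation of this trace on cycle classes. Since $z_r$ is supported on the image of $\cY_{\cH,r}$, the trace decomposes according to the double coset space $(H(\QQ_p) \cap K_r) \backslash K_r / K_{r+1}$, which, through the Iwahori factorisation of $K_r$, corresponds to $H(\QQ_p)$-orbits on an infinitesimal neighbourhood of $u$ in $G/\overline{Q}_G$. The open-orbit hypothesis means that exactly one orbit is relevant ``generically'', with stabiliser $H \cap u\overline{Q}_G u^{-1}$, and its contribution is naturally parametrised by a torsor for a quotient of $C(\Zp)/C^0(\Zp)$. The remaining orbits, supported on the complement of the open orbit, sit inside lower-dimensional subvarieties and therefore contribute terms divisible by a positive power of $p$ relative to the open one; a standard slope argument then shows these are killed by $e_{Q_G}$.

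The surviving contribution is precisely the Galois trace from level $r+1$ to level $r$ in the pro-$p$ abelian tower of $\cY_{\cG}$ whose associated Galois extension of $E$, by the Shimura-reciprocity description of $\pi_0$ through the homomorphism $\Res_{E/\QQ}(\Gm) \to \mathcal{C}$, is exactly the $p$-adic Lie extension cut out by $C^0 \subset C$; hence the classes $\{e_{Q_G} z_r\}_r$ assemble into the desired Iwasawa cohomology class. The hard part will be the local orbit computation: one must verify carefully that the non-open cells contribute $U_{Q_G}$-slopes of strictly positive $p$-adic valuation and that the open-cell contribution distributes evenly over the $C/C^0$-torsor, which is precisely where the sphericity hypothesis does its real work. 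The global reinterpretation through Shimura reciprocity is largely formal given the Hodge-type compatibility of the Shimura data and the assumption on the centres.
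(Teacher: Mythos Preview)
Your overall strategy---reduce to a local computation, use the open-orbit hypothesis to control the trace, then reinterpret via Shimura reciprocity---is the right shape, but the way you propose to execute the local step diverges from the paper's argument, and the divergence is precisely where you identify the ``hard part''.

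You anticipate a decomposition of the trace into an ``open-cell'' piece plus contributions from lower-dimensional $H$-orbits, and you propose to kill the latter by a slope argument: non-open cells should contribute with strictly positive $U_{Q_G}$-slope and hence vanish after applying $e_{Q_G}$. This step is not justified in your sketch, and I do not see how to make it work as stated. Being supported on a lower-dimensional subvariety of $G/\overline{Q}_G$ does not by itself force a cycle class to have positive slope under $U_{Q_G}$; one would need to identify each such contribution with a pushforward along a map whose degree carries an extra factor of $p$, and there is no mechanism in your outline producing this.

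The paper sidesteps this issue entirely. Rather than computing the na\"ive trace along $K_{r+1}\to K_r$ and then discarding error terms, it introduces a strictly dominant cocharacter $\eta$ with $\tau=\eta(p)$, and works with level groups $U_r=\{g\in G(\Zp): \tau^{-r}g\tau^r\in G(\Zp),\ g\bmod p^r\in \overline{Q}_G^0\}$ together with an intermediate group $U_r'=U_r\cap \tau U_r\tau^{-1}$. The open-orbit hypothesis is used \emph{not} to bound error terms but to show that a full set of coset representatives for $U_r'\backslash U_r$ can be chosen inside $u^{-1}Q_H^0 u$ (since the $Q_H^0$-orbit of $[1]$ contains the entire mod-$p$ neighbourhood of $[1]$ in $G/\overline{Q}_G$). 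Combined with condition (B), which forces $u^{-1}Q_H^0 u\cap U_r'=u^{-1}Q_H^0 u\cap U_{r+1}$, this yields an \emph{exact} identity
\[
[\tau]_{U_{r+1},U_r,\star}(z_{G,r+1})=\mathcal{T}\cdot z_{G,r},
\]
with no non-open-orbit terms present at all. Passing to the conjugate tower $V_r=\tau^{-r}U_r\tau^r$ converts this into $\pr_\star(\xi_{G,r+1})=\mathcal{T}\cdot\xi_{G,r}$, and only then does one invert $\mathcal{T}$ (or apply $e^{\ord}$) to obtain a genuinely norm-compatible family. The abelian variable is obtained not from your $K_r$ tower directly but by replacing $G$ with $\tilde G=G\times C$ and taking $L_{\tilde G}^0=L_G\times C^0$; the resulting tower then projects to $J\times C_n$ for $J$ the parahoric of $Q_G$, and the \'etale spectral sequence over the integral model identifies the limit with Iwasawa cohomology over the extension of $E$ cut out by $\kappa_\infty$.

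In short: your ``hard part'' is an artefact of the level groups you chose. With the paper's $U_r$, $U_r'$, and the twist by $u$ and $\tau$, the open-orbit hypothesis gives an exact relation and no slope analysis of error terms is required.
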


  As instances of this, we obtain norm-compatible families of cycles (in the arithmetic middle degree) over the $p$-adic anticyclotomic tower for Shimura varieties attached to the groups $U(n, 1) \times U(n-1, 1)$ and $U(2n-1, 1)$, for any $n$.

  \subsection*{Acknowledgements} I would like to thank Antonio Cauchi, Christophe Cornut, Andrew Graham, Dimitar Jetchev, Jan Nekov\v{a}\'r, Aaron Pollack, Joaquin Rodrigues Jacinto, Shrenik Shah, Chris Skinner, Chris Williams and Sarah Livia Zerbes for illuminating discussions and comments in connection with this paper. I am also grateful to Syed Waqar Ali Shah for several useful comments on an earlier draft (in particular Remark \ref{rmk:waqar} below), and to the anonymous referee for his or her close reading of the paper.

 \section{Formalism of cohomology functors}

  We begin by introducing a formalism which is intended to model the behaviour of cohomology of symmetric spaces attached to reductive groups. This section is entirely abstract nonsense; its aim is to allow the theorems of the later parts of this paper to be stated and proved in a uniform way, by axiomatising the properties that a ``reasonable'' cohomology theory should satisfy. (The real work in this paper will begin at \S \ref{sect:norm-compat}.)

  \subsection{Cohomology functors}

  Let $G$ be a locally pro-finite topological group, and $\Sigma \subseteq G$ an open submonoid (not necessarily a subgroup). We shall frequently omit to specify $\Sigma$, in which case it should be understood that $\Sigma = G$. We let $\Sigma^{-1}$ be the monoid $\{ g^{-1}: g \in \Sigma\}$.

  \begin{definition}
   We let $\mathcal{P}(G, \Sigma)$ be the category whose objects are the open compact subgroups of $G$ contained in $\Sigma$, and whose morphisms are given by
   \[ \operatorname{Hom}_{\mathcal{P}(G, \Sigma)}(U, V) = \{ g \in U \backslash \Sigma / V: g^{-1} U g \subseteq V\}. \]
   We write $[g]_{U, V}$, or just $[g]$, for the morphism $U \to V$ corresponding to the double coset of $g$, with composition defined by $[g] \circ [h] = [hg]$ for any two composable morphisms $[g], [h]$. If $U \subseteq V$ we write $[1]_{U, V}$ as $\operatorname{pr}_{U, V}$ or just $\operatorname{pr}$.
  \end{definition}

  \begin{definition}
   By a \emph{cohomology functor} $M$ for $(G, \Sigma)$ (with coefficients in some commutative ring $A$) we mean a pair of functors $M = (M_\star, M^\star)$, where
   \[ M^\star: \mathcal{P}(G, \Sigma)^{\mathrm{op}} \to A\text{-}\mathrm{Mod} \quad\text{and}\quad
   M_\star: \mathcal{P}(G, \Sigma^{-1}) \to A\text{-}\mathrm{Mod},\]
   such that:
   \begin{enumerate}
    \item $M_\star(U) = M^\star(U)$ for every object $U$; we write the common value simply as $M(U)$.
    \item writing $[g]_{U, V, \star} = M_\star([g]_{U, V})$ and similarly $[g]^\star_{U, V}$, we have
    \[ [g]_{U, V}^\star = [g^{-1}]_{V, U, \star} \in \operatorname{Hom}_{A}(M(V), M(U))\]
    whenever this makes sense, i.e. whenever $g^{-1}U g = V $ and $g \in \Sigma$.
   \end{enumerate}
   We shall omit the subscripts $U, V$ if they are clear from context. A \emph{morphism of cohomology functors} $M \to N$ is a collection of $A$-module maps $M(U) \to N(U)$ for each open compact $U \subseteq \Sigma$, compatible with the maps $[g]_\star$ and $[g]^\star$.
  \end{definition}

  In practice, we shall obtain examples as follows: we shall consider a functor from $\mathcal{P}(G)$ to some category of geometric objects (e.g.~manifolds or schemes) sending $U$ to a ``symmetric space of level $U$'', and the maps $[g]$ will correspond to degeneracy maps between these symmetric spaces, twisted by the right-translation action of $g \in G$. Taking cohomology of these spaces -- for any ``reasonable'' cohomology theory, admitting pushforward and pullback maps -- will then give a cohomology functor in the above sense. This will be made precise in \S \ref{sect:examples} below. The role of the monoid $\Sigma$ is to allow us to work with cohomology with coefficients in lattices in $G$-representations (which may not be invariant under the whole group $G$).

  \begin{definition}
   \label{def:cart}
   We say $M$ is \emph{Cartesian} if the following condition is satisfied: for any open compact subgroup $V \subseteq \Sigma$ and any two open compact subgroups $U, U' \subseteq V$, we have a commutative diagram
   \[
    \begin{tikzcd}[row sep=large]
     \bigoplus_\gamma M(U_\gamma)
     \arrow[r, "\sum \pr_\star"]
     & M(U)\\
     M(U')
     \arrow[r, "\pr_\star"']
     \arrow[u, "{\sum [\gamma]^\star}"]
     & M(V)
     \arrow[u, "\pr^\star"']
    \end{tikzcd}
   \]
   where the sum runs over the double quotient $\gamma \in U \backslash V / U'$; we define $U_\gamma = \gamma U' \gamma^{-1} \cap U$; the left vertical map is the sum of the pullback maps for the inclusions $\gamma^{-1} U_\gamma \gamma \subset U'$; and the top horizontal map is the direct sum of the natural pushforward maps.
  \end{definition}

  Note that if $U \triangleleft V$ and we take $U'= U$, then all the $U_\gamma$ are equal to $U$, and we see that the composite of pushforward and pullback has to be given by summing over coset representatives for $V / U$.

  \begin{remark}
   \label{rmk:waqar}
   The above notion of a Cartesian cohomology functor is the analogue for locally profinite groups of the notion of a \emph{Mackey functor} considered in the representation theory of finite or profinite groups; see \cite{webb} for an overview of this theory. (I am very grateful to S.W.A.~Shah for this observation.)
  \end{remark}

 \subsection{Completions}

  If $M$ is a cohomology functor for $(G, \Sigma)$, then there are two canonical ways to extend $M(-)$ from open compact subgroups to all compact subgroups, which correspond roughly to the ``completed cohomology'' and ``completed homology'' of Emerton \cite{emerton06}. We define
  \[ \overline{M}(K) = \varinjlim_{U \supseteq K} M(U),\qquad M_{\mathrm{Iw}}(K) = \varprojlim_{U \supseteq K} M(U)\]
  where the limits run over open compact subgroups of $\Sigma$ containing $K$. Evidently, the first limit is taken with respect to the pullback maps $\pr^\star$, and the second with respect to the pushforwards $\pr_\star$.

  We shall not use $\overline{M}$ in the present paper (we mention it only for completeness); it is $M_{\mathrm{Iw}}$ which is most relevant. We shall refer to it as the \emph{Iwasawa completion}, by analogy with Iwasawa cohomology groups of $p$-adic Galois representations (in fact this is more than a mere analogy, as we shall see in due course).

  It is clear that we can define pushforward maps $[g]_*: \Mi(K) \to \Mi(K')$ for all triples $(K, K', g)$ with $g^{-1} K g \subseteq K'$ and $g \in \Sigma^{-1}$ (compatibly with the given definition when $K, K'$ are open). More subtly, if $M$ is Cartesian, we can also define \emph{finite} pullback maps on Iwasawa cohomology: if $K' \subseteq K$ has finite index, then we can find systems of open compact subgroups $(K_n)_{n \ge 1}$ with $\bigcap_{n \ge 1} K_n = K$, and similarly $(K_n')_{n \ge 1}$ with intersection $K'$, such that $K_n' \cap K = K'$ and $[K_n : K_n'] = [K : K']$ for all $n$. The Cartesian property then implies that pullback maps from level $K_n$ to level $K_n'$ are compatible with the pushforwards for changing $n$, and we deduce the existence of pullback maps at the infinite level making the following diagram commute for all $n$:
  \[
   \begin{tikzcd}
    \Mi(K) \arrow[r] \arrow[d] & \Mi(K')\arrow[d]\\
    M(K_n) \ar[r]& M(K_n').
   \end{tikzcd}
  \]
  With these definitions, the pushforwards and pullbacks in Iwasawa cohomology satisfy a Cartesian property extending Definition \ref{def:cart}, for any three compact subgroups $U, U' \subseteq V$ with $[V : U] < \infty$.

  \begin{remark}
   The situation for $\overline{M}$ is, of course, exactly the opposite: one can define finite pushforwards and arbitrary pullbacks.
  \end{remark}

 \subsection{Functoriality in $G$}

  Let $\iota: H \into G$ be the inclusion of a closed subgroup, and $M_G$ a Cartesian cohomology functor for $(G, \Sigma)$. Then we can define a cohomology functor $\iota^{!}(M_G)$ for $(H, \Sigma \cap H)$ by setting
  \[ \iota^{!}(M_G)(U) = M_{G, \Iw}(\iota(U)). \]

  \begin{definition}
   If $M_H$ and $M_G$ are Cartesian cohomology functors for $H$ and $G$ respectively, then a \emph{pushforward map} $\iota_\star: M_H \to M_G$ is a morphism $M_H \to \iota^!(M_G)$ of cohomology functors for $(H, \Sigma \cap H)$.
  \end{definition}

  A more pedestrian definition is that a pushforward map consists of morphisms $\iota_{U, \star}: M_H(U \cap H) \to M_G(U)$ for any open compact $U \subseteq \Sigma$, compatible with pushforward maps $[h]_\star$ for $h \in H \cap \Sigma^{-1}$, and satisfying a compatibility with pullbacks expressed in terms of a Cartesian diagram involving the double quotient $U \backslash V / (V \cap H)$.

\section{Examples of cohomology functors}

 \label{sect:examples}
 The motivating examples of the above formalism arise as follows.

 \subsection{Betti cohomology}

  Let us suppose that $G = \cG(\QQ_p)$, where $\cG$ is a connected reductive group over $\QQ$. Let $K_\infty^\prime$ denote a maximal compact subgroup of $\cG'(\RR)^\circ$, where $\cG'$ is the derived subgroup of $\cG$, and $(-)^\circ$ denotes the identity component. We set $K_\infty = K_\infty' \cdot \mathcal{Z}(\RR)$, where $\mathcal{Z}$ is the centre of $\cG$, and $\mathcal{X} = \cG(\RR) / K_\infty$; this has natural structure as a smooth manifold, preserved by the left action of $\cG(\RR)$. (Alternatively, we can define $K_\infty$ as the preimage in $G(\RR)$ of a maximal compact subgroup of $\cG^{\mathrm{ad}}(\RR)^{\circ}$, where $\cG^{\mathrm{ad}} = \cG / \mathcal{Z}$.)

  We choose an open compact subgroup $U^p \subseteq \cG(\AA_{\mathrm{f}}^p)$ which is small enough that for \emph{any} open compact $U \subseteq G$, the product $U^p U$ is neat. Such subgroups exist, since there are only finitely many conjugacy classes of maximal compacts in $G$. We can then define
  \[ Y(U) = \cG(\QQ) \backslash \Big( (\cG(\AA_\mathrm{f}) / U^p U) \times \mathcal{X}\Big), \]
  Our assumptions on $U^p$ imply that $Y(U)$ is a smooth manifold for every $U$, and the right action of $G$ on $\cG(\AA)$ gives a covariant functor
  \[ Y: \mathcal{P}(G) \to \underline{\text{Man}}^{\mathrm{ur}}, \]
  where $\underline{\text{Man}}^{\mathrm{ur}}$ is the category whose objects are smooth manifolds and whose morphisms are finite unramified coverings. Since Betti cohomology is both covariantly and contravariantly functorial on $\underline{\text{Man}}^{\mathrm{ur}}$, we obtain cohomology functors $M_G(-) = H^i(Y(-), A)$, for every $i \ge 0$ and ring $A$. These are not in general Cartesian, so we shall impose the following hypothesis:

  \begin{hyp*}[``Axiom SV5'']
   The centre $\mathcal{Z}$ is isogenous to the product of a $\QQ$-split torus and an $\RR$-anisotropic torus. Equivalently, $\mathcal{Z}(\QQ)$ is discrete in $\mathcal{Z}(\AA_\mathrm{f})$.
  \end{hyp*}

  (See the section ``Additional axioms'' in \cite[Chapter 5]{milne}, where the widely-used abbreviation SV5 originates).

  It follows from SV5 that for any morphism $[g]: U \to V$ in $\mathcal{P}(G)$, the map $Y(U) \to Y(V)$ has degree $[V: g^{-1} U g]$. In the setting of Definition \ref{def:cart}, we have $[V: U] = \sum_{\gamma} [U': U_\gamma]$. It follows that in the commutative square
  \[
   \begin{tikzcd}[row sep=large]
    \bigsqcup_{\gamma} Y(U_\gamma) \ar[r, "{\sqcup_\gamma \pr}"]
    \ar[d, "{\sqcup_\gamma [\gamma]}"'] &
    Y(U)\ar[d, "\pr"] \\
    Y(U') \ar[r, "\pr"] &Y(V)
   \end{tikzcd}\tag{\dag}
  \]
  all the maps are surjections and the vertical maps have the same degree, and hence the diagram is Cartesian. Since pushforward and pullback maps commute in Cartesian diagrams, we deduce that $H^i(Y(-), A)$ is a Cartesian cohomology functor. So we have shown:

  \begin{proposition}
   If Axiom SV5 holds, then for any ring $A$ and integer $i \ge 0$, the functor $M(-) = H^i(Y(-), A)$ is a Cartesian cohomology functor for $G$ with coefficients in $A$ (and similarly for compactly-supported cohomology).\qed %The same also holds for $H^i(Y^\circ(-), A)$, while $H^i(\tilde Y(-), A)$ is a cohomology functor but is not Cartesian in general.
  \end{proposition}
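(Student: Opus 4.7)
\medskip

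\noindent\textbf{Proof plan.} The plan is to verify the three required features of a Cartesian cohomology functor in turn, with the Axiom SV5 entering in exactly one place, namely a degree computation.

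\smallskip

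First I would check that $Y$ really extends to a functor $\mathcal{P}(G) \to \underline{\text{Man}}^{\mathrm{ur}}$, which boils down to observing that for any $[g]_{U,V}$ with $g^{-1}Ug \subseteq V$, right-multiplication by $g$ on $\cG(\AA_{\mathrm f})$ descends to a well-defined map $Y(U) \to Y(V)$ which is a finite unramified covering (using the neatness of $U^pU$ and $U^pV$). Pullback and proper pushforward on Betti cohomology are functorial on such coverings, so $M^\star$ and $M_\star$ as defined are functors, and the compatibility $[g]^\star = [g^{-1}]_\star$ whenever $g^{-1}Ug = V$ is automatic because then the covering is a diffeomorphism.

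\smallskip

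The crucial step, where SV5 is used, is the degree formula: for $[g]_{U,V}$, the covering $Y(U) \to Y(V)$ has degree exactly $[V: g^{-1}Ug]$. Unwinding the double coset description, this reduces to showing that for a neat level the stabiliser in $\cG(\QQ)$ of an adelic point modulo $U^pU K_\infty$ lies in $\mathcal{Z}(\QQ)$, and then that $\mathcal{Z}(\QQ) \cap \mathcal{Z}(\AA_{\mathrm f})U^p U \cdot \mathcal{Z}(\RR)$ acts trivially on the relevant fibre; this is exactly where the discreteness of $\mathcal{Z}(\QQ)$ in $\mathcal{Z}(\AA_{\mathrm f})$ provided by SV5 is essential, since it lets one trade the adelic centre for the archimedean centre, which has been quotiented out. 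I expect this orbit/stabiliser bookkeeping to be the only genuinely delicate input; everything else is formal.

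\smallskip

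Given the degree formula, I would deduce that the commutative square $(\dag)$ of manifolds is in fact Cartesian. Indeed, all four maps are finite unramified coverings and are surjective, and the identity $[V:U] = \sum_\gamma [U':U_\gamma]$ flagged in the excerpt matches the degree of the right vertical map with the total degree of the left vertical map; this forces $\bigsqcup_\gamma Y(U_\gamma)$ to be identified with the fibre product $Y(U) \times_{Y(V)} Y(U')$, indexed by the double coset parametrisation of the fibres. From the pullback square of finite unramified coverings one obtains, by the standard base-change identity between pullback and proper pushforward on Betti cohomology, the commutative diagram of Definition \ref{def:cart}. Exactly the same argument (using proper pushforward on compactly supported cochains) handles $H^i_c(Y(-),A)$, completing the proof.
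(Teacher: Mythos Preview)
Your proposal is correct and follows essentially the same route as the paper: establish $Y$ as a functor to finite unramified coverings, use Axiom SV5 to show the covering $Y(U)\to Y(V)$ has degree exactly $[V:g^{-1}Ug]$, combine this with the index identity $[V:U]=\sum_\gamma[U':U_\gamma]$ to conclude that the square $(\dag)$ is Cartesian, and then invoke base-change for pushforward/pullback in Cartesian squares of finite coverings. The only difference is that you spell out the stabiliser bookkeeping behind the degree formula a little more than the paper does, but the overall argument is the same.
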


  \begin{remark}
   There are many important examples where Axiom SV5 is not satisfied, such as Hilbert modular groups $\Res_{K/\QQ} \GL_2$ for $K$ totally real. These can be dealt with by the following workaround. Let $E$ denote the closure in $\cG(\QQ_p)$ of the discrete group $\mathcal{Z}(\QQ) \cap U^p Z_0$, where $Z_0$ is the (unique) maximal compact of $\mathcal{Z}(\QQ_p)$. We then set $G = \cG(\QQ_p) / E$, and for $U \subseteq G$, we let $Y(U)$ be the symmetric space of level $U^p \cdot \pi^{-1}(U)$, where $\pi$ is the projection from $\cG(\QQ_p)$ to $G$. Then the cohomology of $Y(-)$ is Cartesian as a functor on open compacts of $G$. We leave the details to the interested reader.
  \end{remark}

  \subsection{Coefficients}

   More generally, we may also consider cohomology with coefficients in local systems; here the role of the monoid $\Sigma$ becomes important. If Axiom SV5 holds, and $M$ is an $A$-module with an $A$-linear left action of $\Sigma$, then for every open compact $U \subseteq \Sigma$, the $U$-action on $M$ gives rise to a local system $\mathcal{V}_M$ of $A$-modules on $Y(U)$; and for every morphism $[g]: U \to V$ in $\mathcal{P}(G, \Sigma)$, we can define $[g]^*$ to be the composite
  \[ H^i(Y(V), \mathcal{V}_M) \to H^i(Y(U), [g]^* \mathcal{V}_M) \to H^i(Y(U), \mathcal{V}_M)\]
  where the second arrow is given by the action of $g$ on $M$. The same construction gives pushforward maps for morphisms in $\mathcal{P}(G, \Sigma^{-1})$; so the groups $H^i(Y(-), \mathcal{V}_M)$ form a cohomology functor for $(G, \Sigma)$, and one can verify that this is also Cartesian.

  One obvious case of interest is when $A = \mathcal{O}_K$, for some $p$-adic field $K$, and $M$ is an $\mathcal{O}_K$-lattice in an algebraic representation of $\cG$ over $K$. In this case, one can take $\Sigma$ to be the monoid $\{ g \in G: g \cdot M \subseteq M\}$. However, one can also consider more sophisticated coefficient modules (not necessarily of finite type over $A$), such as the modules of locally analytic distributions appearing in \cite{loefflerzerbes16} and \cite{JLZ}.

 \subsection{\'Etale cohomology}

  Let us now suppose that $\cG$ admits a Shimura datum, so that we can identify $\mathcal{X}$ with the set of $\cG(\RR)$-conjugates of a cocharacter $h: \Res_{\mathbf{C}/\RR} \GL_1 \to \cG_{\RR}$ satisfying the axioms of \cite{deligne71}. Then Deligne's theory of canonical models shows that there is a number field $E \subset \mathbf{C}$ (the reflex field) and a smooth quasiprojective $E$-variety
  \[ \mathcal{Y}(U) := \operatorname{Sh}_{U^p U}(\cG, \mathcal{X})_{E}, \]
  for each open $U$, whose $\mathbf{C}$-points are canonically identified with $Y(U)$.

  \begin{proposition}
   For any integers $i, n$, the groups
   \[ M(U) = H^i_{\et}\Big(\mathcal{Y}(U), \Zp(n)\Big)\]
   form a Cartesian cohomology functor with coefficients in $\Zp$; and similarly for motivic cohomology with coefficients in $\ZZ$.
  \end{proposition}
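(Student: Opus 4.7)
The plan is to mirror the argument already given for Betti cohomology, replacing smooth manifolds with smooth quasiprojective $E$-schemes and finite unramified coverings with finite \'etale morphisms. By Deligne's theory of canonical models, the assignment $U \mapsto \mathcal{Y}(U)$ refines $Y(-)$ to a covariant functor $\mathcal{Y}: \mathcal{P}(G) \to \underline{\mathrm{Sch}}_E^{\mathrm{f\acute{e}t}}$, where the target is the category of smooth quasiprojective $E$-schemes with finite \'etale morphisms; indeed, neatness of $U^p U$ implies smoothness of each $\mathcal{Y}(U)$, while Axiom SV5, combined with the complex-analytic computation of degrees already performed, guarantees that $[g]: \mathcal{Y}(U) \to \mathcal{Y}(V)$ is finite \'etale of degree $[V : g^{-1} U g]$.

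Next I would verify that the algebraic analogue of diagram $(\dag)$, with $\mathcal{Y}$ in place of $Y$, is Cartesian in $\underline{\mathrm{Sch}}_E$. Since the arrows are finite \'etale maps of smooth $E$-varieties, it suffices to check this after base change to $\mathbf{C}$; but then one recovers precisely the Cartesian square of manifolds used in the Betti case, so the algebraic square is itself Cartesian (e.g.\ by faithfully flat descent, or simply because the induced map from $\bigsqcup_\gamma \mathcal{Y}(U_\gamma)$ to the algebraic fibre product is a finite \'etale map of smooth varieties which is a bijection on $\mathbf{C}$-points, hence an isomorphism).

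The third step is to feed this Cartesian square into \'etale cohomology. For finite \'etale morphisms $f$, one has both the usual pullback $f^\star$ and a trace (pushforward) $f_\star$ on $H^i_\et(-, \Zp(n))$, and these commute in Cartesian squares of finite \'etale morphisms by the (trivial case of the) finite base change theorem. Applying this to the diagram produced in the previous step yields exactly the commutative square required by Definition \ref{def:cart}, so $M(U) = H^i_\et(\mathcal{Y}(U), \Zp(n))$ is a Cartesian cohomology functor.

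For motivic cohomology the same scheme-level Cartesian diagram is the input; the remaining ingredient is a trace/transfer along finite \'etale morphisms together with base change in Cartesian squares of finite \'etale maps between smooth $E$-varieties. This is standard, e.g.\ via Voevodsky's finite correspondences (where finite \'etale maps have canonical transposes and composition of correspondences matches pullback--pushforward in Cartesian squares) or, equivalently, via Bloch's higher Chow groups with the flat pullback and proper pushforward formalism. The one point requiring care --- and the main obstacle I anticipate --- is the base change compatibility in the motivic setting, since motivic cohomology lacks the black-box proper base change theorem enjoyed by \'etale cohomology; but for finite \'etale morphisms between smooth varieties this reduces to a formal manipulation of correspondences, so no essential new difficulty arises.
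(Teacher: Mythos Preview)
Your proposal is correct and follows essentially the same approach as the paper, which simply says the argument is ``much as before'': $\mathcal{Y}(-)$ is a functor to smooth $E$-varieties with \'etale coverings, Axiom SV5 makes the diagram $(\dag)$ Cartesian in the category of $E$-varieties, and the Cartesian property of cohomology follows. Your write-up supplies the details the paper omits, in particular the base-change-to-$\mathbf{C}$ argument for why the scheme-level square is Cartesian and the explicit identification of the relevant base-change/trace formalism in the \'etale and motivic settings.
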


  This is proved much as before: $\mathcal{Y}(-)$ becomes a functor from $\mathcal{P}(G)$ to the category of smooth $E$-varieties and \'etale coverings, and \'etale cohomology is both covariantly and contravarantly functorial on this category. Moreover, Axiom SV5 implies that the diagram corresponding to (\dag) is Cartesian in the category of $E$-varieties (not just topological spaces) so one obtains the Cartesian property of the cohomology from this.

  One can also replace $\mathcal{Y}(U)$ with its canonical integral model over $\mathcal{O}_{E, S}$, for $S$ a sufficiently large finite set of places (containing all those above $p$), in situations where such models are known to exist (e.g.~if the Shimura datum $(\cG, \mathcal{X})$ is of Hodge type); this has the advantage that the \'etale cohomology groups become finitely-generated over $\ZZ_p$. We can also consider \'etale cohomology with coefficients, much as in the Betti theory above.

 \subsection{Functoriality}

  Now suppose that we have an embedding $\iota: \cH \into \cG$ of reductive groups over $\QQ$ (both satisfying Axiom SV5). We can then apply the constructions above to either $\cH$ or $\cG$, and we indicate the group concerned by a subscript.

  If there is a cocharacter $h: \Res_{\mathbf{C}/\RR} \GL_1 \to \cH_{\RR}$ such that $(\cH, [h])$ and $(\cG, [\iota \circ h])$ are both Shimura data, then $K_{\cH, \infty}$ (resp. $K_{\cG, \infty}$) is identified with the centraliser of $h$ in $\cH(\RR)$ (resp. $\cG(\RR)$). It follows that $K_{\cH, \infty} = K_{\cG, \infty} \cap \cH$, so that $\cX_{\cH}$ is a closed submanifold of $\cX_{\cG}$.

  For more general embeddings of groups $\cH \into \cG$ (not necessarily underlying a morphism of Shimura data), there may not be a natural map $\cX_\cH \to \cX_\cG$, because $K_{\cG, \infty} \cap \cH(\RR)$ can be strictly smaller than $K_{\cH, \infty}$. We thus define
  \[ \widetilde{\cX}_{\cH} = \cH(\RR) / \left( K_{\cG, \infty} \cap \cH(\RR)\right), \]
  so that there are maps $\cX_\cH \twoheadleftarrow \widetilde{\cX}_{\cH} \into \cX_{\cG}$, compatible with the action of $\cH(\RR)$. Of course, in the Shimura variety setting we have $\widetilde{\cX}_{\cH} = \cX_{\cH}$; in the general case, $\widetilde{\cX}_{\cH} \to \cX_{\cH}$ is a real vector bundle.

  Finally, we shall choose a (sufficiently small) prime-to-$p$ level group $U_\cG^p$ for $\cG$, and define $U_{\cH}^p = \cH(\AA_{\mathrm{f}}^p) \cap U_\cG^p$. We thus have maps
  \[ Y_\cH(U \cap H) \twoheadleftarrow \tilde Y_{\cH}(U \cap H) \to Y_{\cG}(U) \]
  for every open compact $U \subseteq G$, where $H = \cH(\QQ_p)$. Moreover, since the fibres of $\tilde Y_{\cH} \to Y_\cH$ are real vector spaces, pullback along this map gives an isomorphism in cohomology (and also for compactly-supported cohomology, up to a shift in degree). With these definitions, the following is elementary:

  \begin{proposition}
   Define cohomology functors by
   \[ M_H(-) = H^i(Y_\cH( -), A),\qquad M_G(-) = H^{i + c}(Y_\cG(-), A), \]
   for some $i \ge 0$ and some coefficient ring $A$, where $c = \dim \cX_\cG - \dim \tilde \cX_{\cH}$. Then the composite of pullback and pushforward along the maps above defines a morphism of cohomology functors $\iota_\star: M_H \to M_G$. The same applies to \'etale cohomology, defining
   \[ M_H(-) = H^i_{\et}(\cY_\cH(-), \Zp(n)),\qquad M_G(-) = H^{i + 2c}_{\et}(\cY_\cG(-), \Zp(n+c)) \]
   for any $i, n$, where $c$ is the codimension of $\cX_\cH$ in $\cX_\cG$ as a complex manifold.
  \end{proposition}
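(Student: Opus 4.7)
The plan is to construct $\iota_{U,\star}$ explicitly and then verify the two conditions --- compatibility with $[h]_\star$ for $h \in H \cap \Sigma^{-1}$, and a Cartesian pullback condition --- that together form the pedestrian description of a morphism of cohomology functors. For the construction, I would take $\iota_{U, \star}$ to be the composite
\[
M_H(U \cap H) \xrightarrow{\pi^\star} H^i(\widetilde Y_\cH(U \cap H), A) \xrightarrow{j_\star} M_G(U),
\]
where $\pi \colon \widetilde Y_\cH(U \cap H) \to Y_\cH(U \cap H)$ is the real vector-bundle projection (so $\pi^\star$ is an isomorphism, as its fibres are contractible) and $j \colon \widetilde Y_\cH(U \cap H) \to Y_\cG(U)$ is the natural map. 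The map $j$ is a closed embedding of smooth oriented manifolds of pure codimension $c$ --- one uses the neatness of $U_\cG^p$ to rule out self-intersections and Axiom SV5 to identify $\widetilde \cX_\cH$ with the $\cH(\RR)$-orbit of the basepoint in $\cX_\cG$ --- so $j_\star$ is the Gysin pushforward, shifting degree by $c$. The \'etale version is identical, with the Gysin map shifting degree by $2c$ and twisting by $\Zp(c)$.

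Equivariance under $[h]_\star$ for $h \in H \cap \Sigma^{-1}$ is immediate: right translation by $h$ on $\cG(\AA_\mathrm{f})$ restricts to finite unramified coverings of $Y_\cH(-)$, $\widetilde Y_\cH(-)$ and $Y_\cG(-)$ at the relevant levels, compatibly with $\pi$ and $j$. The desired commutativity is then the covariant functoriality of $\pi^\star$ and of $j_\star$ under such coverings.

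The real content lies in the Cartesian pullback compatibility. For $U \subseteq V$ in $\mathcal{P}(G)$, consider the Cartesian geometric square
\[
\begin{tikzcd}
Z' \arrow[r, "{j'}"] \arrow[d, "{\pi'}"] & Y_\cG(U) \arrow[d, "\pr"] \\
\widetilde Y_\cH(V \cap H) \arrow[r, "j"] & Y_\cG(V)
\end{tikzcd}
\]
with $Z' := \widetilde Y_\cH(V \cap H) \times_{Y_\cG(V)} Y_\cG(U)$. Base change for the Gysin pushforward along the finite unramified covering $\pr$ yields $\pr^\star \circ j_\star = j'_\star \circ (\pi')^\star$ in cohomology. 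An orbit-theoretic analysis identifies $Z'$ with $\bigsqcup_\gamma \widetilde Y_\cH(\gamma^{-1} U_\gamma \gamma \cap H)$, where $\gamma$ ranges over $U \backslash V / (V \cap H)$ and $U_\gamma := \gamma (V \cap H) \gamma^{-1} \cap U$; each component embeds into $Y_\cG(U)$ as a $[\gamma]$-translate of $j$, and $\pi'$ restricts on each component to the natural pullback along $\gamma^{-1} U_\gamma \gamma \cap H \subseteq V \cap H$. Combined with the functoriality of $\pi^\star$ at every level, this is precisely the Cartesian diagram of Definition~\ref{def:cart} characterising $\iota_\star$ as a morphism of cohomology functors. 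The principal obstacle is therefore this orbit-theoretic decomposition of $Z'$ into level-twisted copies of $\widetilde Y_\cH$: a fiddly but routine exercise in adelic double cosets and stabiliser computations, leveraging the finiteness of $U \backslash V / (V \cap H)$.
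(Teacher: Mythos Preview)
The paper offers no proof: it sets up the maps $Y_\cH(U \cap H) \twoheadleftarrow \tilde Y_{\cH}(U \cap H) \to Y_{\cG}(U)$, declares the proposition ``elementary'', and moves on. Your proposal is precisely the natural unpacking of that declaration, and its structure---define $\iota_{U,\star} = j_\star \circ \pi^\star$, check $[h]_\star$-equivariance by functoriality of pushforward under finite coverings, then verify the Cartesian axiom via proper base change and a double-coset decomposition of the fibre product $Z'$ over $U \backslash V / (V \cap H)$---is exactly what the paper is leaving to the reader.

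One point deserves more care than you give it: the assertion that $j$ is a \emph{closed embedding}. Neatness of $U^p_\cG U$ certainly ensures that $j$ is an unramified immersion (no local branching from torsion in the arithmetic groups), and in the Shimura-datum setting it is a standard but not entirely trivial fact (Deligne, Pink) that neatness of the level on $\cG$ forces the morphism of canonical models to be a genuine closed immersion. In the general Betti setting, however, ruling out global self-intersections---distinct components of $\widetilde Y_\cH(U\cap H)$ landing on overlapping images in $Y_\cG(U)$---requires identifying the $\cG(\QQ)$-stabiliser of a point of $\widetilde\cX_\cH \subset \cX_\cG$ with its $\cH(\QQ)$-stabiliser, and this is not literally what ``neatness of $U^p_\cG$'' says. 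The construction survives regardless: $j$ is in any case finite and unramified (proper, a local immersion), and the Gysin map $j_\star$ exists for such morphisms with the same degree shift, so this is a gap in the justification rather than in the argument itself.
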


  We can also formulate versions of these statements with coefficients, noting that the pullback of sheaves from $Y_{\cG}$ to $Y_{\cH}$ corresponds to restriction of modules from $\Sigma$ to $\Sigma \cap H$.

 \section{The norm-compatibility machine}
 \label{sect:norm-compat}

  \subsection{Notations} We suppose $\iota: H \into G$ is an inclusion of reductive group schemes over $\Zp$. We fix a Borel subgroup $B_G$ of $G$, and maximal torus $T_G \subseteq B_G$, in such a way that the intersections $B_H$, $T_H$ of these with $H$ are a Borel and maximal torus in $H$.

  \begin{definition}
   By a \emph{mirabolic subgroup} of $G$ we mean an algebraic subgroup-scheme $Q_G^0$ of the following form: we choose a parabolic $Q_G \supseteq B_G$, and let $Q_G = L_G \cdot N_G$ be its standard Levi factorisation (so $T_G \subseteq L_G$). We choose a normal subgroup $L_G^0 \teq L_G$, and we let $Q_G^0 = N_G \cdot L_G^0$. We define mirabolic subgroups of $H$ similarly.
  \end{definition}

  Our goal is to show that if $Q_G^0$ and $Q_H^0$ are mirabolics in $G$ and $H$ satisfying a certain compatibility property, then -- for any map of Cartesian cohomology functors $M_H \to M_G$ -- we obtain maps
  \[ M_{H, \Iw}(Q_H^0(\Zp)) \to \left[ M_{G, \Iw}(Q_G^{0}(\Zp))\right]^{\mathrm{fs}}, \]
  where ``fs'' denotes the finite-slope part for an appropriate Hecke operator (this will be defined below).

  \subsection{The input}
   \label{sect:Eisclass}

  As input, we need examples of ``interesting'' classes in $M_{H, \Iw}(Q_H^0(\Zp))$. Examples of these arise as follows:
  \begin{itemize}

   \item For any group $H$, and any globalisation $\cH$ of $H$, we can take $Q_H^0 = Q_H = H$, and consider the identity class in $M_H(H(\Zp))$ where $M_H$ denotes degree 0 Betti cohomology (or \'etale cohomology, when this is defined). Perhaps surprisingly, this case is by no means trivial, and will in fact give rise to many of our most interesting examples.

   \item For $\cH = \GL_2 / \QQ$ we can take $Q_H = \stbt**{}*$ the standard Borel and $Q^0_H$ the mirabolic subgroup $\stbt * * {} 1$. The Siegel units $\left({}_c g_{0, 1/p^r}\right)_{r \ge 1}$, for some suitable auxilliary integer $c > 1$, are a norm-compatible family of modular units of level $\{ Q_H^0 \bmod p^r\}$ (cf.~\cite[\S 2]{kato04}); they thus give rise to classes in $M_{H, \Iw}(Q^0_H(\Zp))$ with $M_H(-)$ taken to be degree 1 \'etale, Betti, or motivic cohomology.

   \item More generally, for $\cH = \GSp_{2n}$ we can take $Q_H$ to be the Klingen parabolic and $Q_H^0$ the ``mira-Klingen'' subgroup
   $\left(\begin{smallmatrix}
   \star&\star&\dots&\star&\star\\
    &\star&\dots&\star&\star\\
    &\vdots&\ddots&\vdots & \vdots\\
    &\star&\dots&\star&\star\\
    &&&&1
   \end{smallmatrix}\right)$. A construction due to Faltings \cite{faltings05} gives an integrally-normalised Eisenstein class with norm-compatibility in this tower, living in the groups $H^{2n-1}_{\et}(Y_{\cH}, \Zp(n))$; for $n = 1$ this reduces to Kato's Siegel unit class.

   \item We can take direct (i.e.~exterior cup) products of the above examples in the obvious fashion.
  \end{itemize}

  The norm-compatiblity satisfied by these classes is quite weak, or even vacuous (i.e. the quotients $H / Q_H^0$ are small or trivial). The machinery of this section will allow us to parlay this into a far stronger norm-compatiblity statement for their pushforwards to $G$.

  \subsection{A flag variety}

   Let $Q_H^0$ be a mirabolic in $H$, and $Q_G$ a parabolic in $G$. We consider the left action of $G$ on the quotient $\mathcal{F} =  G / \bar{Q}_G$, where $\bar{Q}_G$ is the opposite of $Q_G$ (relative to our fixed maximal torus $T_G$). Via the embedding $\iota$, we can restrict this to an action of $Q_H^0$. We shall assume there is some $u \in G(\Zp)$ such that the following conditions are satisfied:

   \begin{enumerate}[(A)]
    \item The $Q_H^0$-orbit of $u$ is open in $\mathcal{F}$.
    \item We have
    \[ u^{-1} Q_H^0 u \cap \bar{Q}_G \subseteq \bar{Q}_G^0, \]
    where $\bar{Q}_G^0 = \bar{N}_G \cdot L^0_G$ for some normal reductive subgroup $L^0_G \trianglelefteqslant L_G$.
   \end{enumerate}

   Of course, condition (B) is always satisfied if we take $L^0_G = L_G$, but we obtain stronger results if we take $L_G^0$ to be smaller. In most of the examples below, $L_G^0$ will turn out to be either $Z_G$ or $\{1\}$. We shall define $Q_G^0 = N_G \cdot L_G^0$ (the opposite of $\bar{Q}_G^0$).

 \subsection{Level groups}

  We fix some cocharacter $\eta \in X_{\bullet}(T_G)$ which factors through $Z(L_G)$, and which is strictly dominant, so that $\langle \eta, \Phi \rangle > 0$ for every relative root $\Phi$ of $G$ with respect to $Q_G$; and we set $\tau = \eta(p)$. (We shall show later that our constructions are actually independent of $\eta$.)

  We then have
  \[ \tau N_G(\Zp) \tau^{-1} \subseteq N_G(\Zp),\qquad  \tau^{-1}\bar{N}_G(\Zp) \tau \subseteq \bar{N}_G(\Zp),  \]
  and both $\tau N_G(\Zp) \tau^{-1}$ and $\tau^{-1}\bar{N}_G(\Zp) \tau$ are in the kernel of reduction modulo $p$ (so both inequalities are strict unless $N_G = \{1\}$, which is a trivial case). Moreover, if $\tau^{-r} g \tau^r \in G(\Zp)$ for some $r \ge 0$, then $g \pmod{p^r} \in \bar{Q}_G$.

  \begin{notation}
   We define the following open compact subgroups of $G(\Zp)$, for $r \ge 0$.
   \begin{itemize}
    \item $U_r \coloneqq \{ g \in G(\Zp): \hfill\tau^{-r} g \tau^r \in G(\Zp)$ and $ g \pmod{p^r} \in \bar{Q}_G^0\}$.
    \item $U_r' \coloneqq \{ g \in G(\Zp):\hfill \tau^{-(r+1)} g \tau^{(r+1)} \in G(\Zp)$ and $ g \pmod{p^r} \in \bar{Q}_G^0\}$.
    \item $V_r \coloneqq \tau^{-r} U_r \tau^r$.
   \end{itemize}
  \end{notation}

  Note that $U_r \supseteq U_r' \supseteq U_{r+1}$, and $U_r' = U_r \cap \tau U_r \tau^{-1}$. Moreover, for $r \ge 1$ all three groups have Iwahori decompositions with respect to $Q_G$ and $\bar{Q}_G$: if we set $N_r \coloneqq \tau^r N_G(\Zp) \tau^{-r}$, $\bar{N}_r \coloneqq \tau^{-r} \bar{N}_G(\Zp) \tau^{r}$, and $L_r \coloneqq \{ g \in L_G(\Zp): g \pmod{p^r} \in L^0_G\}$, then we have
  \begin{align*}
   U_r & = \bar{N}_0 \times L_r \times N_r, &
   U_r' &= \bar{N}_0 \times L_r\times N_{r+1}, &
   V_r &= \bar{N}_r \times L_r \times N_{0}.
  \end{align*}

  \begin{lemma} Suppose $r \ge 1$. Then:
   \begin{enumerate}[(i)]
    \item We have $u^{-1}Q_H^0 u \cap U_r'  = u^{-1}Q_H^0 u \cap U_{r+1} $.
    \item We have
    \[ [u^{-1}Q_H^0 u \cap U_{r}: u^{-1}Q_H^0 u \cap U_{r}'] = [U_r: U_r'].\]
   \end{enumerate}
  \end{lemma}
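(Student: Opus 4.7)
Set $S \coloneqq u^{-1} Q_H^0 u$, regarded as a closed $\Zp$-subgroup scheme of $G$. The pivotal structural input is that conditions (A) and (B) together make $S \cap \bar Q_G = Q_H^0 \cap u \bar Q_G u^{-1}$ a smooth $\Zp$-group scheme: (A) makes the orbit map $S \to G / \bar Q_G$ smooth at the identity (open orbit gives a surjective derivative), and (B) identifies $S \cap \bar Q_G$ with $S \cap \bar Q_G^0$, so the stabiliser inherits the relevant smoothness. In particular, reduction maps from $\Zp$-points to $\ZZ / p^m$-points of $S \cap \bar Q_G$ are surjective for every $m \geq 1$, so Hensel lifting is available.

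For (i), take $g \in S \cap U_r'$ with Iwahori factorisation $g = \bar n \ell n$, where $\bar n \in \bar N_0$, $\ell \in L_r$, $n \in N_{r+1}$. Strict dominance of $\eta$ places $N_{r+1}$ in the $p^{r+1}$-principal congruence subgroup, so $g \equiv \bar n \ell \pmod{p^{r+1}}$. Since $u g u^{-1} \in Q_H^0(\Zp)$ and $\bar n \ell \in \bar Q_G(\Zp)$, the reduction of $u \bar n \ell u^{-1}$ mod $p^{r+1}$ defines a point of $(Q_H^0 \cap u \bar Q_G u^{-1})(\ZZ/p^{r+1})$. By smoothness it lifts to an honest element $\bar g'' \in S \cap \bar Q_G(\Zp)$ with $\bar g'' \equiv \bar n \ell \pmod{p^{r+1}}$. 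Condition (B) then places $\bar g''$ in $\bar Q_G^0(\Zp) = \bar N_G(\Zp) \cdot L_G^0(\Zp)$, and uniqueness of the $\bar N_G \cdot L_G$ factorisation, applied modulo $p^{r+1}$, forces $\ell \in L_{r+1}$, so $g \in U_{r+1}$.

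For (ii), (i) yields $S \cap U_r' = S \cap U_{r+1}$ and we have $[U_r : U_r'] = [N_r : N_{r+1}]$, so it suffices to show that the Iwahori $N$-component map $\psi \colon U_r \to N_r / N_{r+1}$, $\bar n \ell n \mapsto n \bmod N_{r+1}$, restricts to a surjection $S \cap U_r \twoheadrightarrow N_r / N_{r+1}$ with kernel $S \cap U_r'$. The kernel claim is immediate from the Iwahori decomposition. For surjectivity, given $n_0 \in N_r$, smoothness of the orbit map guarantees an $s \in S(\Zp)$ with $s \cdot [\bar Q_G] = n_0 \cdot [\bar Q_G]$, so $s = n_0 \bar q$ for some $\bar q \in \bar Q_G(\Zp)$; rearranging into Iwahori order exhibits $s$ as an element of $S \cap U_r$ whose $N$-component is congruent to $n_0$ modulo $N_{r+1}$.

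The main obstacle is promoting the Lie-algebraic transversality supplied by (A) to genuine $\Zp$-smoothness of $S \cap \bar Q_G$, since it is this smoothness that legitimises Hensel lifting in (i) and the $\Zp$-point surjectivity of the orbit map in (ii). Once the smoothness is in place, the rest is bookkeeping with the Iwahori decomposition and the big-cell coordinates on $G / \bar Q_G$.
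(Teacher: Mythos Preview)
Your argument for (i) takes an unnecessary detour. Condition (B) is stated as an inclusion of $\Zp$-group schemes, so it holds on $R$-points for every $\Zp$-algebra $R$; in particular, on $\ZZ/p^{r+1}$-points. The paper uses exactly this: from $q \in U_r'$ one has $\tau^{-(r+1)}q\tau^{r+1}\in G(\Zp)$, hence $q \bmod p^{r+1}\in \bar Q_G$; combined with $q \in u^{-1}Q_H^0 u$, condition (B) applied over $\ZZ/p^{r+1}$ gives $q\bmod p^{r+1}\in \bar Q_G^0$, i.e.\ $q\in U_{r+1}$. No Hensel lifting, and hence no smoothness of $S\cap\bar Q_G$, is needed. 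Your route reaches the same point only after lifting $\bar n\ell$ to a $\Zp$-point of $S\cap\bar Q_G$ and then invoking (B) on $\Zp$-points --- but you could have invoked (B) directly on the mod-$p^{r+1}$ point and skipped the lift entirely. So the ``main obstacle'' you flag is not an obstacle for (i) at all.

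Your argument for (ii) is in the same spirit as the paper's, but the sentence ``rearranging into Iwahori order exhibits $s$ as an element of $S\cap U_r$'' hides a genuine gap. From $s=n_0\bar q$ with $n_0\in N_r$ and $\bar q\in\bar Q_G(\Zp)$ one certainly gets $\tau^{-r}s\tau^r\in G(\Zp)$, but the second membership condition for $U_r$ --- that $s\bmod p^r\in\bar Q_G^0$ --- is \emph{not} automatic: the Levi component of $\bar q$ could be anything in $L_G(\Zp)$. The paper closes this by observing that $s\equiv\bar q\pmod{p^r}$ lies in $u^{-1}Q_H^0 u\cap\bar Q_G$ modulo $p^r$, and then applying (B) once more (over $\ZZ/p^r$) to force $\bar q\bmod p^r\in\bar Q_G^0$, whence $s\in U_r$. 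You need this step too; ``rearranging into Iwahori order'' does not supply it. As for the smoothness of the orbit map that you invoke to produce $s$: the paper uses the same surjectivity of $S(\Zp)$ onto the $\Zp$-points of the open orbit, though without dwelling on it. Your justification (``open orbit gives a surjective derivative'') is fine over $\QQ_p$ but is not valid as stated over $\mathbf{F}_p$; if you want to make this rigorous you should argue that condition (A), being an openness statement for a $\Zp$-subscheme, forces the tangent map to be surjective fibrewise.
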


  \begin{proof}
   Part (i) follows from the assumption (B) on $Q_H^0$ (applied modulo $p^{(r+1)}$): if $q \in u^{-1}Q_H^0u \cap U_r'$, then $q \bmod p^{r+1}$ is in $ u^{-1}Q_H^0 u \cap \bar{Q}_G$, hence it is in $ u^{-1}Q_H^0 u  \cap \bar{Q}_G^0$.

   For part (ii), we need to show that there is a set of representatives for $U_r' \backslash U_r$ contained in $u^{-1}Q_H^0 u$. Projection to the $N$ factor of the Iwahori decomposition gives an isomorphism
   \[ U_r' \backslash U_r = N_{r+1} \backslash N_r. \]
   Let $[1]$ denote the image of the identity of $G$ in $\mathcal{F}$. Since the orbit of $[1]$ under $u^{-1}Q_H^0 u$ is open as a $\Zp$-subscheme of $\mathcal{F}$, and it contains $[1]$, it must contain the preimage in $\mathcal{F}(\Zp)$ of $[1] \bmod p$. Hence, for any $x \in N_r$, there exists $q \in Q_H^0(\Zp)$ such that $u^{-1} q u$ lies in $N_{r+1} x \bar{Q}_G$. In particular, $u^{-1} q u \pmod{p^r} \in \bar{Q}_G$, so in fact $u^{-1} q u \pmod{p^r} \in \bar{Q}_G^0$ and thus $u^{-1} q u \in U_r$. However, by construction $u^{-1} q u$ maps to the coset of $x$ in $N_{r+1} \backslash N_r$, so $u^{-1} q u$ represents the coset of $x$ in $U_r' \backslash U_r$.
  \end{proof}

 \subsection{The construction}

  Let $M_G$ be a Cartesian cohomology functor for $(G, \Sigma)$, where $\Sigma$ is some monoid containing $G(\Zp)$ and $\tau^{-1}$; let $M_H$ be a Cartesian cohomology functor for $(H, \Sigma \cap H)$; and let $\iota_\star: M_H \to M_G$ be a pushforward map. Then we consider the following diagram:
  \[
  \begin{tikzcd}
   M_{H, \Iw}(Q_H^0 \cap u U_{r + 1} u^{-1}) \arrow[r, "{[u]_\star}"] \arrow[d, equal] & M_G(U_{r+1}) \arrow[d] \arrow[rdd, dashed, out = -5, in=20, looseness=2.5, "{[\tau]_{U_{r+1}, U_r, \star}}" near start] &                                   \\
   M_{H, \Iw}(Q_H^0 \cap u U_r' u^{-1}) \arrow[r, "{[u]_\star}"]      & M_G(U_r') \arrow[r, "{[\tau]_\star}"]                                      & M_G(\tau^{-1} U_r'\tau) \arrow[d] \\
   M_{H, \Iw}(Q_H^0 \cap u U_r u^{-1}) \arrow[u] \arrow[r, "{[u]_\star}"]  & M_G(U_r) \arrow[u] \arrow[r, dotted]  & M_G(U_r)
  \end{tikzcd}
  \]
  Here upward (resp.~downward) vertical arrows are given by the pullback (resp.~pushforward) along the natural projection maps. The commutativity of the lower left square follows from assertion (ii) of the preceding lemma, together with the Cartesian axiom for $\iota_\star: M_H \to M_G$. The dotted arrow making the lower right square commute is the definition of the Hecke correspondence $\mathcal{T}$ associated to the double coset $[U_r \tau^{-1} U_r]$.

  Given $z_H \in M_H(Q_H^0(\Zp))$, we can define $z_{G, r}$ to be the image of $z_H$ under the map
  \[
   M_{H, \Iw}(Q_H^0) \xrightarrow{\pr^*} M_{H, \Iw}(Q_H^0 \cap u^{-1} U_{r} u) \xrightarrow{[u]_*} M_G(U_r).
  \]
  Note that this element depends only on the class of $u$ in $Q_H^0 \backslash G$.

  \begin{proposition}
   We have $[\tau]_{U_{r+1}, U_r, \star} (z_{G, r+1}) = \mathcal{T} \cdot z_{G, r}$.
  \end{proposition}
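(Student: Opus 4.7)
The plan is to reduce the identity by a short diagram-chase to the commutativity of the lower-left square of the displayed diagram, which has already been identified in the text as the step requiring the Cartesian axiom for $\iota_\star$ and part (ii) of the preceding lemma.

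First, I would factor the dashed morphism as a composition in $\mathcal{P}(G,\Sigma^{-1})$. Using the composition rule $[g]\circ[h]=[hg]$, one has
\[
[\tau]_{U_{r+1},U_r,\star}
\;=\;
\pr_{\tau^{-1}U_r'\tau,U_r,\star}\circ [\tau]_{U_r',\tau^{-1}U_r'\tau,\star}\circ \pr_{U_{r+1},U_r',\star},
\]
which is valid since $U_{r+1}\subseteq U_r'$ and $\tau^{-1}U_r'\tau\subseteq U_r$. Together with the definition $\mathcal{T}=\pr_{\tau^{-1}U_r'\tau,U_r,\star}\circ [\tau]_{U_r',\tau^{-1}U_r'\tau,\star}\circ \pr^\star_{U_r,U_r'}$ from the dotted lower-right square, the proposition is reduced to the single identity
\begin{equation*}
\pr_{U_{r+1},U_r',\star}(z_{G,r+1}) \;=\; \pr^\star_{U_r,U_r'}(z_{G,r}) \qquad\text{in } M_G(U_r'). \tag{$\ast$}
\end{equation*}

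Next, I would unwind both sides of $(\ast)$ in terms of $M_H$. Set $K_{r+1}\coloneqq Q_H^0\cap uU_{r+1}u^{-1}$ and $K_r'\coloneqq Q_H^0\cap uU_r'u^{-1}$. By composition of pushforwards (again $[1]\circ[u]=[u]$), the left-hand side of $(\ast)$ equals $[u]_{K_{r+1},U_r',\star}(\iota_\star(z_H|_{K_{r+1}}))$. For the right-hand side, the commutativity of the lower-left square (established in the text from part (ii) of the preceding lemma and the Cartesian axiom for $\iota_\star$) says exactly $\pr^\star_{U_r,U_r'}\circ [u]_\star = [u]_\star\circ \pr^\star_H$, where $\pr^\star_H$ is the finite pullback $M_{H,\Iw}(K_r)\to M_{H,\Iw}(K_r')$; thus the right-hand side of $(\ast)$ equals $[u]_{K_r',U_r',\star}(\iota_\star(z_H|_{K_r'}))$. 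Finally, part (i) of the lemma gives the equality of subgroups $K_{r+1}=K_r'$, so the two sides of $(\ast)$ are literally the same element, and the proposition follows.

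The main obstacle is entirely absorbed in the preparatory material: part (ii) of the lemma is what makes the Cartesian sum (a priori indexed by the double quotient $U_r'\backslash U_r/(u^{-1}Q_H^0u\cap U_r)$) collapse to the identity coset, and part (i) is what makes that single term factor through $K_{r+1}$ rather than $K_r$. Granted those two facts, the proof of the proposition is a purely formal diagram-chase with no further input.
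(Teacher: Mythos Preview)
Your proof is correct and follows essentially the same route as the paper: you reduce to the identity $\pr_{U_{r+1},U_r',\star}(z_{G,r+1}) = \pr^{\star}(z_{G,r})$ in $M_G(U_r')$, which is exactly what the paper states as the content of the lower-left (and implicitly upper-left) squares, and then compose with $[\tau]_\star$ and the final pushforward to obtain both the dashed arrow and $\mathcal{T}$. The only difference is that you make the factorization of $[\tau]_{U_{r+1},U_r,\star}$ and the roles of parts (i) and (ii) of the lemma more explicit than the paper does.
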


  \begin{proof}
   From the compatibility of the lower left square in the diagram, we know that $\pr_{U_{r+1}, U_r', \star}(z_{G, r+1}) = \pr_{U_r', U_r}^\star(z_{G, r})$ as elements of $M_G(U_r')$. The result now follows by mapping both sides into $M_G(U_r)$ via $[\tau]_\star$.
  \end{proof}

  This setup is convenient for proofs, but one can obtain tidier statements by replacing $U_r$ with its conjugate $V_r = \tau^{-r} U_r \tau^r$, and the classes $z_{G, r}$ with their cousins
  \[
   \xi_{G, r} =[\tau^r]_\star \cdot z_{G, r}  \in M_G(V_r).
  \]
  Then we have the following:

  \begin{proposition}
   We have $\pr_{V_{r+1}, V_r, \star}\left(\xi_{G, r+1}\right) = \mathcal{T} \cdot \xi_{G, r}$.\qed
  \end{proposition}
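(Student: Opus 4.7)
The plan is to deduce the new identity from the preceding proposition by applying $[\tau^r]_\star$ to both sides, and to handle the change of level $U_r \leadsto V_r$ by exploiting the composition law $[g]\circ[h] = [hg]$ in $\mathcal{P}$ together with the covariance of $M_\star$.

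First I would check that $[\tau]_{U_{r+1}, U_r}$ and $[\tau^r]_{U_r, V_r}$ are valid morphisms in $\mathcal{P}(G)$: the former because $\tau^{-1} U_{r+1} \tau \subseteq U_r$, which is transparent from the Iwahori decompositions $U_{r+1} = \bar N_0 \times L_{r+1} \times N_{r+1}$ and $U_r = \bar N_0 \times L_r \times N_r$ together with $\tau^{-1} \bar N_0 \tau \subseteq \bar N_0$ and $\tau^{-1} N_{r+1} \tau = N_r$; the latter is even an isomorphism, since $V_r = \tau^{-r} U_r \tau^r$ by definition. The composition rule then gives $[\tau^r]\circ[\tau] = [\tau^{r+1}]_{U_{r+1}, V_r}$. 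Applying $[\tau^r]_\star$ to the identity $[\tau]_\star z_{G,r+1} = \mathcal{T}\cdot z_{G,r}$ of the previous proposition and using functoriality produces
\[ [\tau^{r+1}]_{U_{r+1}, V_r, \star}(z_{G,r+1}) \;=\; [\tau^r]_\star\!\bigl(\mathcal{T}\cdot z_{G,r}\bigr). \]

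Next I would factor the left-hand side through $V_{r+1}$. Since $V_{r+1} \subseteq V_r$, the morphism $[\tau^{r+1}]_{U_{r+1}, V_{r+1}}$ is defined (and is an isomorphism in $\mathcal{P}$), and the composition rule gives $\pr_{V_{r+1}, V_r} \circ [\tau^{r+1}]_{U_{r+1}, V_{r+1}} = [\tau^{r+1}]_{U_{r+1}, V_r}$. Applying $M_\star$ and invoking the definition $\xi_{G, r+1} = [\tau^{r+1}]_\star z_{G, r+1}$ rewrites the left-hand side as $\pr_{V_{r+1}, V_r, \star}(\xi_{G, r+1})$, which is precisely the term we want.

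For the right-hand side I need to know that $[\tau^r]_\star$ intertwines the Hecke correspondence $\mathcal{T}$ on $M_G(U_r)$ with its counterpart on $M_G(V_r)$. This reduces to observing that conjugation by $\tau^r$ carries the double coset $[U_r \tau^{-1} U_r]$ to $[V_r \tau^{-1} V_r]$ (since $\tau$ commutes with $\tau^r$), so the three-step factorisation defining $\mathcal{T}$ on $U_r$ (pull back to $U_r'$, apply $[\tau]_\star$, push forward to $U_r$) maps diagram-square by diagram-square, via the appropriate $[\tau^r]_\star$'s, to the analogous factorisation on $V_r$. This yields $[\tau^r]_\star(\mathcal{T}\cdot z_{G,r}) = \mathcal{T}\cdot\xi_{G,r}$, and combining with the previous paragraph completes the argument.

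The main obstacle is really just the bookkeeping of this last intertwining step: it is formal, but one has to chase several squares to make the conjugation of the Hecke correspondence rigorous within the formalism of cohomology functors (in particular because $\tau^{-1}$ lies in $\Sigma$ but not in $G(\Zp)$, so $\mathcal{T}$ is only accessible through the indirect factorisation rather than as some single $[\tau^{-1}]_\star$). No new input is required beyond the composition law in $\mathcal{P}$ and the functoriality of $M_\star$.
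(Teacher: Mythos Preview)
Your proposal is correct and is precisely the argument the paper has in mind: the proposition is stated with an immediate \qed, i.e.\ it is meant to follow from the preceding one by transporting everything along the isomorphism $[\tau^r]_\star: M_G(U_r) \xrightarrow{\sim} M_G(V_r)$, which is exactly what you carry out. Your careful verification that $[\tau^r]_\star$ intertwines the two incarnations of $\mathcal{T}$ (via the composition law in $\mathcal{P}$ and the fact that $\tau$ is central in its own one-parameter subgroup) is the only nontrivial bookkeeping, and it is done correctly.
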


  It will be convenient to introduce the \emph{finite slope part}
  \[
   M_G(V_r)^{\mathrm{fs}} \coloneqq A[\mathcal{T}, \mathcal{T}^{-1}] \otimes_{A[\mathcal{T}]} M_G(V_r).
  \]

  \begin{proposition}
   The operators $\mathcal{T}$ on $M_G(V_r)$ and $M_G(V_{r+1})$ are compatible under the projection $\pr_{\star}: M_G(V_{r+1}) \to M_G(V_r)$, for each $r \ge 1$.
  \end{proposition}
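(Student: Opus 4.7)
The plan is to unpack $\mathcal{T}$ on $M_G(V_r)$ explicitly via the three-step correspondence
\[
M_G(V_r) \xrightarrow{\pr^\star} M_G(V_r^{\#}) \xrightarrow{[\tau]_\star} M_G(\tau^{-1} V_r^{\#} \tau) \xrightarrow{\pr_\star} M_G(V_r),
\]
where $V_r^{\#} \coloneqq V_r \cap \tau V_r \tau^{-1}$; this is the operator corresponding under $[\tau^r]_\star$ to the Hecke correspondence $[U_r \tau^{-1} U_r]$ on $M_G(U_r)$. A direct computation with the Iwahori decomposition gives $V_r^{\#} = \bar{N}_r \cdot L_r \cdot N_1$ and $\tau^{-1} V_r^{\#} \tau = \bar{N}_{r+1} \cdot L_r \cdot N_0$, with analogous formulas at level $r+1$ (each $\bar{N}$ and $L$ index raised by one). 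The proposition then reduces to the commutativity of three inner squares in a $2 \times 3$ ladder whose top and bottom rows realise $\mathcal{T}$ at levels $V_{r+1}$ and $V_r$ and in which every vertical map is the projection pushforward.

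The middle and rightmost squares, involving $[\tau]_\star$ and the final $\pr_\star$ respectively, are essentially formal. In each of them both routes around the square compose to a single morphism in $\mathcal{P}(G, \Sigma^{-1})$ --- namely $[1 \cdot \tau] = [\tau \cdot 1]$ in the middle and $[1 \cdot 1]$ on the right --- so they agree by the functoriality of $M_\star$ together with the composition rule $[g] \circ [h] = [hg]$. The various inclusions required for the intermediate morphisms to be well-defined (such as $\tau^{-1} V_{r+1}^{\#} \tau \subseteq V_{r+1}$ and $\tau^{-1} V_{r+1}^{\#} \tau \subseteq \tau^{-1} V_r^{\#} \tau \subseteq V_r$) read off directly from the Iwahori formulas.

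The real content is the leftmost square, which exchanges a pullback $\pr^\star$ with a pushforward $\pr_\star$, and this is the step I expect to require genuine work: here the Cartesian axiom of Definition \ref{def:cart} must be invoked. Applied with $V = V_r$, $U = V_r^{\#}$ and $U' = V_{r+1}$, the Cartesian identity is \emph{a priori} a sum over the double coset space $V_r^{\#} \backslash V_r / V_{r+1}$ with terms involving $U_\gamma = \gamma V_{r+1} \gamma^{-1} \cap V_r^{\#}$. The main task is therefore to show that this double coset reduces to the single element $\gamma = 1$ and that the single term $U_1 = V_r^{\#} \cap V_{r+1}$ equals $V_{r+1}^{\#}$. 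The intersection identity is a direct Iwahori computation. For the triviality of the double coset, use the decomposition $V_r = \bar{N}_r \cdot L_r \cdot N_0$: any $v = \bar{n}\, l\, n \in V_r$ factorises as $(\bar{n}\, l) \cdot n$ with $\bar{n}\, l \in \bar{N}_r \cdot L_r \subseteq V_r^{\#}$ and $n \in N_0 \subseteq V_{r+1}$, so $V_r^{\#} \cdot V_{r+1} = V_r$. With this in hand, the Cartesian axiom collapses to the desired square identity $\pr_\star \circ \pr^\star = \pr^\star \circ \pr_\star$, and pasting the three commuting squares produces the proposition.
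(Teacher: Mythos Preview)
Your proposal is correct and follows essentially the same route as the paper. The paper's one-line proof --- that $\mathcal{T}$ on $M_G(V_r)$ and $M_G(V_{r+1})$ ``admit a common set of coset representatives (given by any set of representatives for $N_0/N_1$)'' --- is exactly your observation that $V_r^{\#}\backslash V_r \cong N_1\backslash N_0 \cong V_{r+1}^{\#}\backslash V_{r+1}$, equivalently that $V_r^{\#}\cdot V_{r+1}=V_r$; and the paper's appeal to the Cartesian property is what you have unpacked as the leftmost square of your ladder, the other two squares being (as you note) formal functoriality of $M_\star$.
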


  \begin{proof}
   From the Iwahori decomposition of $V_r$, we see that the operators $\mathcal{T}$ on $M_G(V_r)$ and $M_G(V_{r+1})$ admit a common set of coset representatives (given by any set of representatives for $N_0 / N_1$). So the compatibility follows from the Cartesian property of $M_G$.
  \end{proof}

  So we have a well-defined module
  \[ M_{G, \Iw}(Q_G^0(\Zp))^{\mathrm{fs}} = \varprojlim_r M_G(V_r)^{\mathrm{fs}}.\]

  \begin{theorem}
   \label{mainthm}
   The above construction defines a map
   \[ M_{H, \Iw}(Q_H^0(\Zp)) \to M_{G, \Iw}(Q_G^0(\Zp))^{\mathrm{fs}},\]
   mapping $z_H$ to the compatible system $\left(\mathcal{T}^{-r} \otimes \xi_{G, r}\right)_{r \ge 1}$. \qed
  \end{theorem}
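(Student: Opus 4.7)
The plan is to assemble the two preceding propositions. Since $M_{G,\Iw}(Q_G^0(\Zp))^{\mathrm{fs}}$ is defined as the inverse limit $\varprojlim_r M_G(V_r)^{\mathrm{fs}}$, the theorem reduces to two statements: the family $(\mathcal{T}^{-r} \otimes \xi_{G,r})_{r \ge 1}$ is compatible under the transition maps of this inverse system, and the assignment $z_H \mapsto (\mathcal{T}^{-r} \otimes \xi_{G,r})_r$ is a well-defined $A$-linear map out of the Iwasawa completion $M_{H,\Iw}(Q_H^0(\Zp))$.

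First I would check that each $\xi_{G,r}$ is well-defined from an Iwasawa input. The subgroup $Q_H^0 \cap u^{-1}U_r u$ is open in the compact group $Q_H^0(\Zp)$, because $U_r$ is open compact in $G(\Zp)$, and hence is of finite index. Therefore the Iwasawa pullback
\[
\pr^\star : M_{H,\Iw}(Q_H^0(\Zp)) \to M_{H,\Iw}\bigl(Q_H^0 \cap u^{-1}U_r u\bigr)
\]
is available by the construction of finite-index pullbacks on Iwasawa cohomology in the completions subsection, which relied on the Cartesian hypothesis on $M_H$. Composing with the Iwasawa pushforwards $[u]_\star$ and $[\tau^r]_\star$ (both defined, as $u^{-1} \in G(\Zp) \subseteq \Sigma$ and $\tau^{-r} \in \Sigma$) lands in $M_G(V_r)$.

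Next I would verify compatibility of the system. In $M_G(V_r)^{\mathrm{fs}}$, the proposition just above the theorem gives that $\mathcal{T}$ commutes with $\pr_\star$, so one may pass $\mathcal{T}^{-(r+1)}$ through the transition map; the norm-compatibility $\pr_\star(\xi_{G,r+1}) = \mathcal{T} \cdot \xi_{G,r}$ from the earlier proposition then yields
\[
\pr_\star\bigl(\mathcal{T}^{-(r+1)} \otimes \xi_{G,r+1}\bigr) = \mathcal{T}^{-(r+1)} \otimes \mathcal{T}\xi_{G,r} = \mathcal{T}^{-r} \otimes \xi_{G,r},
\]
as required. The $A$-linearity of $z_H \mapsto (\mathcal{T}^{-r} \otimes \xi_{G,r})_r$ is automatic, since pullback, pushforward, Hecke action, and localisation at $\mathcal{T}$ are all $A$-linear.

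There is no substantial obstacle: the content is essentially contained in the two preceding propositions, which encode the fact that the pushforward of $\xi_{G,r+1}$ from level $V_{r+1}$ to level $V_r$ differs from $\xi_{G,r}$ by a single application of the Hecke operator, and the theorem is a formal repackaging of this in the inverse limit. The only point needing attention is the availability of the Iwasawa pullback along the open, finite-index inclusion $Q_H^0 \cap u^{-1}U_r u \subseteq Q_H^0(\Zp)$, which is exactly the situation covered by the completions formalism.
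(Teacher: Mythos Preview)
Your proposal is correct and matches the paper's approach: the theorem carries a \qed in the paper with no further argument, signalling that it is a formal consequence of the two preceding propositions, and you have spelled out exactly this deduction. The only minor point worth noting is that the inclusion $\tau^{-r} \in \Sigma$ follows because $\Sigma$ is a monoid containing $\tau^{-1}$, which you implicitly use but do not state; otherwise your write-up is a clean unpacking of what the paper leaves implicit.
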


 \subsection{Towers of ``abelian type''}
  \label{sect:abtowers}

  The above construction gives norm-compatibility in the towers $(V_r)_{r \ge 1}$, whose intersection is $Q_G^0(\Zp)$. This incorporates a huge amount of information. In practice it is often simpler to discard the ``non-abelian part'' of this information, by projecting to a simpler level tower.

  Let $\pi: H \to C$ be the maximal torus quotient of $H$ (as an algebraic group over $\QQ_p$), and let $\tilde G$ denote the direct product $G \times C$. The map $\tilde\iota = (\iota, \pi)$ gives a lifting of $H$ to a subgroup of $\tilde G$; and the second projection $\tilde G \to C$ gives an extension of $\pi$ to $\tilde G$.

  We suppose we have subgroups $Q_H^0$ and $Q_G$ satisfying the open-orbit condition (A), and such that $\pi\left(Q_H^0 \cap u\bar{Q}_Gu^{-1}\right)$ is contained in some subtorus $C^0 \subseteq C$. Note that this condition depends only on the $H$-orbit of $u$ in $(G/\overline{Q}_G)(\QQ_p)$. We can then replace $G$ with $\tilde G$ in all of the above constructions, and take
  \[ L_{\tilde G}^0 = L_G \times C^0 \subset L_{\tilde G} = L_G \times C.\]

  If $C_n$ denotes the preimage in $C(\Zp)$ of $C^0 \bmod p^n$, then the group $V_n \subset \tilde{G}(\Zp)$ arising from these new data is then contained (strictly if $n > 1$) in $J_0 \times C_n$, where $J \subseteq G(\Zp)$ is the parahoric subgroup associated to $Q_G$. Since both $V_n$ and $J$ have Iwahori decompositions with respect to the parabolic $Q_{\tilde G}$, the finite-slope parts for $\mathcal{T}$ are compatible under the projection $\pr_{V_n, J \times C_n, \star}$; so Theorem \ref{mainthm} gives us maps
  \[
   M_{H, \Iw}(Q_H^0(\Zp)) \to M_{\tilde G, \Iw}(J \times C_\infty)^{\mathrm{fs}},
  \]
  for any cohomology functor $M_{\tilde G}$ for $\tilde G$. In the setting of Betti and \'etale cohomology, these groups have a more ``classical'' interpretation, as we now show:

  \subsubsection*{The Betti setting: $p$-adic measures} Suppose that we are in the Betti cohomology setting, with $\Zp$-coefficients. That is, $G = \cG \times \QQ_p$ for some $\QQ$-group $\cG$, and $M_G(U) = H^i(Y_\cG(U^p U), \mathcal{V}_M)$ for some $i$, where $\mathcal{V}_M$ is the sheaf corresponding to a lattice $M$ in an algebraic representation of $G$ (preserved by $\tau^{-1}$).

  Let $\mathcal{C}$ be the maximal torus quotient of $\cH$, and let $\Delta_n$ denote the finite arithmetic quotient
  \[ \Delta_n = \mathcal{C}(\QQ) \backslash \mathcal{C}(\AA) /  C_n \cdot \mathcal{C}^p\cdot \mathcal{C}(\RR)^\dag, \]
  where $\mathcal{C}^p$ is the maximal compact subgroup of $\mathcal{C}(\AA_f^p)$, and $\mathcal{C}(\RR)^\dagger$ is the subset of the components of $\mathcal{C}(\RR)$ in the image of $K_{\cH, \infty}$. Then we obtain an extension of $M_G$ to a cohomology functor $M_{\tilde G}$ for $\tilde G$, such that
  \[
   M_{\tilde G}(J \times C_n)^{\mathrm{fs}} = \Zp[\Delta_n] \otimes_{\Zp} M_{G}(J)^{\mathrm{fs}}.
  \]
  Since $M_G(J)$ is finitely-generated over $\Zp$, the limit $e^{\ord} = \lim_{n \to \infty} \mathcal{T}^{n!}$ exists as an endomorphism of $M_G(J)$, and its image is the maximal direct summand $M_G(J)^{\ord}$ on which $\mathcal{T}$ is invertible. Thus we have a natural map $M_{\tilde G}(J \times C_n)^{\mathrm{fs}} \to M_G(J)^{\ord}$; so Theorem \ref{mainthm} gives a map
  \[ M_{H, \Iw}(Q_H^0) \to \Zp[[\Delta_\infty]] \otimes_{\Zp} H^i\left(Y_\cG(U^p J), \mathcal{V}_M\right)^{\ord}.\]
  The right-hand side is the space of $p$-adic measures on the abelian $p$-adic Lie group $\Delta_\infty$, with values in the $\Zp$-module $H^i\left(Y_\cG(U^p J), \mathcal{V}_M\right)^{\ord}$. So we have defined $p$-adic measures interpolating the $Q_G$-ordinary projections of classes pushed forward from $\cH$.

  \begin{remark}
   In most of the examples which interest us, $\mathcal{C}$ will just be $\mathbf{G}_m$, and $\mathcal{C}(\RR)^\dagger = \RR_{> 0}^\times$; and $C_m$ will be the principal congruence subgroup of level $m$, so that $\Delta_\infty \cong \Zp^\times$.
  \end{remark}

  \subsubsection*{The \'etale setting: Iwasawa cohomology} In the \'etale cohomology setting, we can proceed similarly, but we must now treat the finite groups $\Delta_n$ as 0-dimensional algebraic varieties over the reflex field $E$ of $\cY_\cH$, i.e.~as $\operatorname{Gal}(\overline{E} / E)$-sets. For simplicity we suppose that the Shimura datum for $G$ (and hence also for $H$) is of Hodge type; it follows that if $S$ is a sufficiently large finite set of places of $E$ (containing those above $p$), then all our Shimura varieties have smooth models over $\cO_{E, S}$, and the morphisms between them extend to the integral models.

  The Galois action on $\Delta_n$ is given by translation by a character
  \[ \kappa_n: \operatorname{Gal}(\overline{E} / E)^{\mathrm{ab}} \to \Delta_n, \]
  unramified outside $S$, which is the composite of the Artin map for $E$ and the cocharacter $\mu: \Res_{E/\QQ} \Gm \to \mathcal{C}$ determined by the Shimura datum.

  As before, the maps $\cY_{\cH}(U^p U \cap H) \to \cY_{\cG}(U^p U)$ lift to maps into the varieties
  \[ \cY_{\tilde \cG}(U^p U \times \mathcal{C}^p C_n) \cong \cY_{\cG}(U^p U) \times \Delta_n. \]
  The \'etale cohomology of these products over $\overline{E}$ is given by
  \[  H^i_{\et}\left(\cY_{\tilde \cG}(U^p U \times \mathcal{C}^p C_n)_{\overline{E}}, \mathcal{V}_M\right) \cong \Zp[\Delta_n](\kappa_n) \otimes_{\Zp} H^i_{\et}(\mathcal{Y}(U^p U)_{\overline{E}}, \mathcal{V}_M). \]
  This implies a spectral sequence\footnote{Here the use of the $S$-integral models is essential, in order to avoid issues involving compatibility with inverse limits.}
  \begin{multline*}
   E_2^{ij} = H^i\Big(\cO_{E, S}, \Zp[[\Delta_\infty]](\kappa_\infty) \otimes_{\Zp} H^j_{\et}(\mathcal{Y}_{\cG}(U^p J)_{\overline{E}}, \mathcal{V}_M)(n)\Big)\\
   \Rightarrow H^{i+j}_{\et, \Iw}\Big((\mathcal{Y}_{\cG}(U^p J) \times \Delta_\infty)_{\cO_{E, S}}, \mathcal{V}_M(n)\Big).
  \end{multline*}

  Let $\Gamma_\infty$ denote the image of $\kappa_\infty$ in $\Delta_\infty$. Then the $E_2^{ij}$ term can be written as
  \[ \Zp[[\Delta_\infty]] \otimes_{\Zp[[\Gamma_\infty]]} H^i_{\Iw}\Big(\cO_{E_\infty, S}, H^j_{\et}(\mathcal{Y}_{\cG}(U^p J)_{\overline{E}}, \mathcal{V}_M)(n)\Big)
  \]
  where $E_\infty$ is the abelian extension of $E$ (with Galois group $\Gamma_\infty$) cut out by the character $\kappa_\infty$. If $\Gamma_\infty$ has positive dimension, then the $E_2^{0j}$ terms vanish, so we obtain edge maps into $H^1_{\Iw}$. Thus Theorem \ref{mainthm} gives maps into the groups
  \[ \Zp[[\Delta_\infty]] \otimes_{\Zp[[\Gamma_\infty]]} H^1_{\Iw}\Big(\cO_{E_\infty, S}, H^{j-1}_{\et}(\mathcal{Y}_{\cG}(U^p J)_{\overline{E}}, \mathcal{V}_M)^{\ord}(n)\Big).\]

  Specialising to the case $Q_H^0 = H$ gives Theorem \ref{thm:cycles} of the introduction.

 \section{Some example cases}

  \subsection{Hecke-type pairs}

   We refer to pairs $(G, H)$ satisfying our conditions for $Q_H^0 = H$ as \emph{Hecke type}, by analogy with Bump's classification of Rankin--Selberg integrals. In this case, our norm-compatibility machinery gives compatible families of cycle classes (topological or \'etale).

   \subsubsection{Diagonal embeddings of general linear groups}
    \label{sect:GLnGGP}

    Our first example is $G = \GL_n \times_{\Gm} \GL_{n+1}$, for $n \ge 1$, where $\times_{\Gm}$ denotes fibre product over the determinant map to $\Gm$. We take $H = \GL_n$, embedded via $g \mapsto (g \oplus 1, g)$, and $Q_H^0 = H$.

    It is well-known that $(G, H)$ is a spherical pair, i.e.~$H$ has an open orbit on the Borel flag variety of $G$. Moreover, since $\dim H = \dim(G/ B_G) = n^2$, the stabiliser of a point in this orbit has to be finite, and one checks that it is in fact trivial. Hence we can take $Q_G$ to be the upper triangular Borel of $G$; and we can take the abelian quotient $C$ to be the maximal torus quotient of $H$, which is $\GL_1$.

    This case can be globalised in several distinct ways. Firstly, we can take $\cG$ and $\cH$ to be the corresponding split groups over $\QQ$. For $n = 1$ this recovers the classical ``modular symbol'' construction of norm-compatible families of classes in Betti $H^1$ of modular curves, given by paths between cusps, and thus we recover the classical construction of the standard $p$-adic $L$-function of a modular form. For $n > 1$, we obtain $p$-adic $L$-functions associated to Rankin--Selberg $L$-functions for $\GL_n \times \GL_{n+1}$; this recovers a construction due to Kazhdan--Mazur--Schmidt \cite{kms} and Januszewski \cite{januszewski}.

     Alternatively, we can take $\cG$ and $\cH$ to be unitary groups (relative to some imaginary quadratic field $K$ in which $p$ splits), with $\RR$-points
    \[ U(n-1, 1) \into U(n-1, 1) \mathop{\times}_{U(1)} U(n, 1),\]
    and use \'etale cohomology of the canonical models over $K$. The pushforward of the identity class in $H^0_{\et}(\cY_\cH)$ is then the \'etale cycle class of a diagonal cycle, living in the group $H^{2n}_{\et}(\cY_\cG, \Zp(n))$ (the ``arithmetic middle degree''); and the action of Galois on the abelian quotient $\mathcal{C} = U(1)$ cuts out the anticyclotomic extensions of $K$, so we obtain norm-compatibility relations in the anticyclotomic tower at $p$. For $n = 1$, this recovers the norm-compatible family of Heegner points; the case $n = 2$ is essentially the setting of the work of Jetchev and his group on ``unitary diagonal cycles'' \cite{BBJ18}. (The restriction to $p$ split is easily removed, since one checks that the same group-theoretic criterion -- that $H$ have an open orbit on $G/B_G$ with trivial stabiliser -- is also satisfied for the unramified unitary groups associated to a quadratic extension of $\QQ_p$.)

    \begin{remark}
     We have, of course, discarded much useful information by taking $L_G^0$ to be the kernel of the determinant map, when we could in fact have taken it to be $\{1\}$. The extra information arising from choosing $L_G^0 = \{1\}$ is essentially the fact that the above Betti and \'etale classes interpolate in Hida-type $p$-adic families as the weight varies.
    \end{remark}

   \subsubsection{Diagonal embeddings of orthogonal groups}
    \label{sect:SOnGGP}
    We can also consider the analogue of the above construction for orthogonal groups: we choose a quadratic space $V / \QQ$ of dimension $n$, set $V' = V \oplus \QQ e$ where $\langle e, e \rangle = 1$, and define $H = \operatorname{SO}(V)$, $G = \operatorname{SO}(V \oplus V')$. Again, it is well-known that $(G, H)$ is a spherical pair, and it has the property that the stabiliser of a point in the open orbit is trivial. If we choose our global groups in such a way that the picture at $\infty$ is
    \[ \operatorname{SO}(n-2, 2) \into \operatorname{SO}(n-1, 2) \times \operatorname{SO}(n-1, 2), \]
    then we again have a diagonal cycle on a Shimura variety for $\cG$, living in the arithmetic middle degree.

    For $n = 2$ this again recovers Heegner points (up to a harmless central isogeny), since $\operatorname{SO}(2)$ is a non-split torus and $\operatorname{SO}(1, 2) \cong \operatorname{PGL}_2$.  For $n = 3$, the group $\cG$ is isogenous to $\SL_2 \times \SL_2 \times \SL_2$, and we recover the Gross--Kudla--Schoen cycles of \cite{darmonrotger14}.

    \begin{remark}
     For $n \ge 3$ the group $\cG$ has no nontrivial torus quotient. However, the construction can be naturally interpreted in terms of variation in $p$-adic families of Hida type.
    \end{remark}

   \subsubsection{The case of $\GL_n \times \GL_n \subset \GL_{2n}$}
    \label{sect:gl2n}

    We can also consider the ``block--diagonal'' embedding of $H = \GL_n \times \GL_n$ into $G = \GL_{2n}$. In this case, there are two natural possibilities to consider.

    The first option is to take $Q_G$ to be the parabolic having $H$ as its Levi factor. In this case, a straightforward calculation shows that if $u = \stbt{I_n}{I_n}{0_n}{I_n}$, where $I_n$ is the $n \times n$ identity matrix, then $u^{-1} H u \cap \bar{Q}_G = \{ \stbt{X}{}{}X : X \in \GL_n\}$. This is in the kernel of the natural map $H \to \Gm$ given by $(h_1, h_2) \to \det(h_1) / \det(h_2)$, so the construction of \S\ref{sect:abtowers} gives norm-compatibility in an abelian tower with $\Zp^\times$ as the quotient.

    One global setting in which this local computation applies is that studied by \cite{DJR18}: here we take $\cG$ and $\cH$ to be the split groups $\GL_{2n}$ and $\GL_{n} \times \GL_n$, and one obtains $p$-adic $L$-functions for cohomological automorphic representations of $\GL_{2n}(\AA_{\QQ})$ having a Shalika model. However, one can also consider unitary groups split at $p$, with signature at $\infty$ given by
    \[ U(n-1, 1) \times U(n, 0) \into U(2n-1, 1), \]
    in which case one again obtains \'etale classes landing in the arithmetic middle degree, satisfying a norm-compatibility property along the anti-cyclotomic extension of the CM field defining the unitary groups. The properties of these classes are studied further in recent work of Graham and Shah \cite{grahamshah20}, who have shown that these classes also satisfy a ``horizontal'' norm-compatibility relation at primes split in the CM field.

    A more powerful, but more complex, variant of this construction is obtained by taking $B_G$ to be the Borel subgroup, and $u$ the element $\stbt{I_n}{J_n}{0_n}{I_n}$, where $J_n$ is an anti-diagonal matrix of 1's. Then one checks that $u^{-1} H u \cap \bar{Q}_G$ is contained in $L_G$: it is the group of diagonal matrices of the form
    \[ \operatorname{diag}(x_1, \dots, x_n, x_n, \dots, x_1). \]
    So $L_G / L_G^0$ is isomorphic to $\Gm^n$. We can obtain a further $\Gm$ factor by extending our embedding to $\GL_{2n} \times \GL_1$ via the formula $(h_1, h_2) \mapsto \left(\stbt{h_1}{}{}{h_2}, \det h_1\right)$. This can be used to construct $(n+1)$-parameter families of cohomology classes, with one ``abelian'' variable (as above) and $n$ ``weight'' variables. We hope to explore the applications of this construction further in a subsequent paper.

    \begin{remark}
     For $n = 1$, the $\GL_{n} \times \GL_n \into \GL_{2n} \times \GL_1$ and $\GL_n \into \GL_{n+1} \times_{\Gm} \GL_n$ constructions coincide (up to a redundant extra $\GL_1$ factor).
    \end{remark}

  \subsection{Eisenstein-type pairs}

   There are also important examples in which $Q_H^0$ is a proper subgroup of $H$, and we take the input to be one of the non-trivial Iwasawa cohomology classes mentioned in \S\ref{sect:Eisclass}. Since these classes are associated to Eisenstein series for the parabolic $Q_H \subset H$, we refer to these as \emph{Eisenstein type}.

   \subsubsection{Rankin--Selberg and its twists} If we take $H$ to be $\GL_2$, $G$ to be $\GL_2 \times_{\Gm} \GL_2$, and $Q_H^0$ the mirabolic $\stbt \star \star {} 1$, then our conditions are satisfied for the orbit of $u = \stbt 1 1 {} 1$, with $L_G^0 = \{1\}$.

   The ``obvious'' global application, in which $\cG$ and $\cH$ are the split rational forms, is the $p$-direction norm relation for the Euler system of Beilinson--Flach classes \cite{LLZ}. However, taking $\cH = \GL_2$ but $\cG$ to be the quasi-split form of $\GL_2 \times_{\Gm} \GL_2$ corresponding to a real quadratic field $F / \QQ$ (with $p$ split in $F$), then the same local computation gives norm relations for the Asai--Flach Euler system \cite{leiloefflerzerbes18}; and taking $F$ instead to be imaginary quadratic, and our cohomology functors to be Betti rather than \'etale, one obtains the $p$-adic $L$-function of \cite{loefflerwilliams18}.

   \subsubsection{$\GSp_4$ and $\GSp_4 \times \GL_2$}

    We can also take $H = \GL_2 \times_{\Gm} \GL_2$, and $G$ to be $\GSp_4$, with the embedding $\iota: H \into G$ given by a decomposition of the standard representation of $\GSp_4$ into two orthogonal subspaces. We can take $Q_H^0$ to be the fibre product of two copies of the standard mirabolic $\stbt \star \star {} 1$ of $\GL_2$.

    One checks easily that there is an open orbit of $Q_H^0$ in the flag variety for the Siegel parabolic $P_{\mathrm{S}} \subset G$ (the stabiliser of a 2-dimensional isotropic subspace), and the stabiliser of this orbit is $\{1\}$. So we can take $Q_G^0$ to be the unipotent radical of $P_{\mathrm{S}}$. In particular, this group has trivial image under the multiplier map $\mu: G \to \GL_1$, so we deduce a norm-compatibility statement for the abelian tower given by the groups
    \[ J_n = \{ x: x \in P_\mathrm{S} \bmod p, \mu(x) = 1 \bmod p^n\}, \]
    involving the Hecke operator given by the double coset of $\operatorname{diag}(p, p, 1, 1)^{-1}$. This is, of course, precisely the computation underlying the vertical norm-compatibility of the \'etale Lemma--Flach classes of \cite{LSZ17}.

    Another possibility is to take the same group $H$, but to embed it instead into $G = \GSp_4 \times_{\Gm} \GL_2$, with the map being the fibre product of the above map $H \into \GSp_4$ and the second projection $H \to \GL_2$. Here we take $Q_H^0 =  \left(\stbt \star \star {} 1, \star \right)$. One checks that $Q_H^0$ has an open orbit on the Borel flag variety of $G$, with trivial stabiliser. This gives a natural norm-compatible family of \'etale classes in the degree 5 \'etale cohomology of the Shimura variety for $G$ (which is the arithmetic middle degree, since $\cY_\cG$ has dimension 4). This will be investigated in detail in forthcoming work of Hsu, Jin, and Sakamoto, based on a project led by the author and Zerbes at the 2018 Arizona Winter School.

    \begin{remark}
     Iterating the process once more gives a Hecke-type pair
     \[ (G, H) = \Big(\GSp_4 \mathop{\times}_{\Gm} \GL_2 \mathop{\times}_{\Gm} \GL_2,\ \GL_2\mathop{\times}_{\Gm} \GL_2\Big). \]
     After factoring out the copy of $\Gm$ in the centre of both $H$ and $G$, this becomes the $n = 4$ case of the $(\SO_n \times \SO_{n+1}, \SO_n)$ construction above, since $\SO_5 \cong \operatorname{PGSp}_4$ and $\SO_4 = \left(\GL_2 \times_{\Gm} \GL_2\right) / \Gm$. Together with the $\GSp_4$ and $\GSp_4 \times \GL_2$ constructions this gives a new ``trilogy'' of Euler systems, as a sequel to the ``tale of two trilogies'' described in \cite{BCDDPR}.
    \end{remark}

%    \begin{remark}
%     It would be interesting to look for ``twisted versions'' of this example (involving non-split groups $\cG, \cH$ over $\QQ$ which are isomorphic to the above over $\bar{\QQ}$).
%    \end{remark}
%

   \subsubsection{$\GL_3 \times \GL_1$ and its twists} We can also consider the embedding
   \begin{gather*}
   H = \GL_2 \times \GL_1 \into G = \GL_3 \times \GL_1\\
   (g, x) \mapsto  \left(\stbt g {}{}x, x\right),
   \end{gather*}
   with $Q_H^0 = \stbt \star \star {} 1 \times \GL_1$. One computes that there is an open $Q_H^0$-orbit on the Borel flag variety of $G$ whose stabiliser is trivial; in particular, we obtain norm-compatibility in an abelian tower with Galois group $\Zp^\times \times \Zp^\times$.

   This example has (at least) two interesting global applications. Firstly, we can take $K$ imaginary quadratic (with $p$ split), $\cH$ to be the group $\GL_2 \times_{\Gm} \Res_{K/\QQ} \GL_1$, and $\cG$ the quasi-split unitary group $\GU(2, 1)$. Then the Galois action on the above abelian tower cuts out the maximal abelian extension of $K$ unramified outside $p$, so we obtain norm-compatible families of Galois cohomology classes over this 2-dimensional $p$-adic Lie extension. This will be pursued in forthcoming work of the author with Skinner and Zerbes. Secondly, we can take $\cG$ and $\cH$ to be split over $\QQ$, in which case we obtain a 2-variable measure with values in the Betti cohomology of the 5-dimensional symmetric space for $\GL_3$. This will be studied further in a forthcoming work of the author and Williams, where it will be used to construct a $p$-adic $L$-function for cohomological automorphic representations of $\GL_3 / \QQ$ which are not necessarily self-dual.

   \subsubsection{Some curiosities}

    We mention two further pairs of groups in which case the above machinery seems \emph{not} to work as well as one would hope.

    The preprint \cite{cauchi17} studies the image of the Faltings Eisenstein classes for $H = \GSp_4$ under an embedding into $\GU(2, 2)$, which factors through the kernel $G$ of the natural map $\GU(2, 2) \to \mathrm{U}(1)$; this kernel $G$ is isomorphic to $\operatorname{GSpin}(4, 2)$. The choice of these Eisenstein classes requires us to take $Q_H^0$ to be the 7-dimensional Klingen mirabolic. However, the Borel flag variety of $G$ has dimension 6 (the root system of $G$ is the same as that of $\SL_4$). So every orbit of $Q_H^0$ on the flag variety of $G$ has to have stabilisers of dimension at least 1; and one computes that the stabilisers always surject onto the maximal torus quotient of $G$. Hence there is no way to obtain norm-compatibility in an ``abelian'' tower for these cohomology classes; but one obtains instead a compatibility in Hida-type families. (The situation would be much improved if one could take $Q_H^0$ to be a mirabolic associated to the Borel subgroup of $H$, but we do not know of a construction of Iwasawa cohomology classes for this level tower.)

    The preprint \cite{cauchirodrigues18} studies the embedding
    \[ \GL_2 \mathop{\times}_{\mathbf{G}_m} \GL_2 \mathop{\times}_{\mathbf{G}_m} \GL_2 \into \GSp_6, \]
    taking $Q_H^0$ to be the group $(\stbt \star \star {} 1, \star, \star)$. This group has dimension 8, so it has no chance of having an open orbit on the 9-dimensional Borel flag variety of $G$. Instead, one checks that it has an open orbit on $G / \bar{Q}_G$, where $\bar{Q}_G$ is block-upper-triangular with blocks of size $(1, 2, 2, 1)$, and the stabiliser of a point in this orbit is trivial. Hence one obtains norm-compatibility in a very large $p$-adic tower. The difficulty in this case is in proving the \emph{horizontal} norm relations: the strategy followed for $\GSp_4$ in \cite{LSZ17} relies on a multiplicity-one property for decompositions of spherical representations of $G \times H$, which is closely bound up with the existence of an open orbit of $Q_H$ (sic, not $Q_H^0$) on $G / \overline{B}_G$. This is clearly impossible, for dimension reasons.

    \begin{remark}
     Note that in the first example, the problem is that $Q_H^0$ is ``too large'', and the second example, $Q_H^0$ is ``too small''.
    \end{remark}

 \section{Brion's classification}

  The embeddings of algebraic groups $H \into G$ such that $G$ is semisimple, $H$ is reductive and $(G, H)$ is a spherical pair (i.e. $H$ has an open orbit on $G/B_G$) have been classified by Brion \cite{brion87}. It suffices to classify the associated pairs of Lie algebras $(\mathfrak{g}, \mathfrak{h})$. These are all built up via direct products from an explicit list of ``indecomposable'' pairs.

  In Hecke-type constructions (with $Q_H^0 = H$), our machinery works most neatly if $\dim(G/B_G) = \dim H$, so the stabiliser of a point in the open orbit is finite. There are eight infinite families of indecomposable pairs $(\fg, \fh)$ with this property: the pairs $(\mathfrak{sl}_n \times \mathfrak{sl}_{n+1}, \mathfrak{sl}_n \times \mathfrak{t})$ and $(\mathfrak{so}_n \times \mathfrak{so}_{n+1}, \mathfrak{so}_n)$, which correspond to the constructions of \S \ref{sect:GLnGGP} and \S \ref{sect:SOnGGP} respectively, and six more:
  \begin{multicols}{2}
   \begin{itemize}
    \item $(\mathfrak{sl}_n, \mathfrak{so}_n)$
    \item $(\mathfrak{sl}_{2n+1}, \mathfrak{sp}_{2n})$
    \item $(\mathfrak{so}_{2n+1}, \mathfrak{so}_n \times \mathfrak{so}_{n+1})$
    \item $(\mathfrak{so}_{2n+1}, \fsl_n \times \mathfrak{t})$
    \item $(\mathfrak{sp}_{2n}, \fsl_n \times \mathfrak{t})$
    \item $(\mathfrak{so}_{2n}, \mathfrak{so}_n \times \mathfrak{so}_n)$
   \end{itemize}
  \end{multicols}
  Here $\mathfrak{t}$ is the 1-dimensional abelian Lie algebra. There are also the following ``sporadic'' examples:
  \begin{multicols}{2}
   \begin{itemize}
    \item $\Big((\fsp_4)^2 \times \fsl_2, (\fsl_2)^3\Big)$
    \item $\Big((\fsp_4)^3, (\fsl_2)^4\Big)$
    \item $\Big(\fsp_6 \times \fsp_4, \fsp_4 \times \fsl_2\Big)$
    \item $\Big(\fsp_8 \times \fsp_4, \fsp_4 \times \fsp_4\Big)$
    \item $\Big(\fsl_3 \times \fsp_4, (\fsl_2)^2 \times \mathfrak{t}\Big)$
    \item $\Big(\fsl_4 \times \fsl_2, (\fsl_2)^2 \times \mathfrak{t}\Big)$
    \item $\Big(\fsl_4 \times \fsp_4, (\fsl_2)^3 \times \mathfrak{t}\Big)$
    \item $\Big(\mathfrak{e}_6, \fsp_8\Big)$
    \item $\Big(\mathfrak{e}_7, \fsl_8\Big)$
    \item $\Big(\mathfrak{e}_8, \so_{16}\Big)$
    \item $\Big(\mathfrak{f}_4, \fsp_6 \times \fsl_2\Big)$
    \item $\Big(\mathfrak{g}_2, \fsl_2 \times \fsl_2\Big)$
   \end{itemize}
  \end{multicols}
  The cases in which $\mathfrak{h}$ has $\mathfrak{t}$ as a factor are particularly interesting, because we can potentially use this to obtain norm-compatibility in a non-trivial abelian level tower. One of these, the pair $(\mathfrak{so}_{2n+1}, \fsl_n \times \mathfrak{t})$, appears in recent work of Cornut \cite{cornut}, who constructs an Euler system in the cohomology of a Shimura variety for $\SO(2n-1, 2)$, using cycles given by an embedding of $U(n-1, 1)$; our theory thus allows Cornut's Euler system to be extended over the $p$-adic anticyclotomic tower. Arithmetic applications of the remaining cases, such as $\fsl_n\times \mathfrak{t} \into \fsp_{2n}$, do not seem to have been explored at all (beyond the case $n = 1$) and it would be highly interesting to do so.

  This list can also be used to find new examples of ``Eisenstein-type'' constructions, by searching for spherical pairs $(\mathfrak{g}, \fh)$ such that $\fg = \fg' \times (\fsl_2)^n$, $\fh = \fh' \times (\fsl_2)^n$ for some $\fg', \fh'$ and $n \ge 1$, and the map $\fh \into \fg$ identifies the two $(\fsl_2)^n$ factors. This gives rise to spherical pairs of the form $(Q^0)^n \times H' \into G'$, where $Q^0$ is the standard mirabolic in $\GL_2$; and we can obtain norm-compatible families in symmetric spaces for $G'$ by pushing forward $\GL_2$ Eisenstein classes. Searching for pairs $(\fg, \fh)$ of this form in the above list, we recover the constructions described above for $G' = \GL_2 \times_{\Gm} \GL_2$, $\GSp_4$, $\GSp_4 \times_{\Gm} \GL_2$, and $\GL_3 \times \GL_1$, and two new cases, namely $G' = \GL_4$ and $G' = \GSp_4 \times_{\Gm} \GSp_4$.

  \begin{remark}
   This list does not exhaust the potential applications of our main theorem, since it only covers cases where we can take $Q_G$ to be the Borel of $G$ and $L_G^0 = \{1\}$. These are not the only cases where the method gives interesting results, as the examples of \S \ref{sect:gl2n} show.
  \end{remark}

\providecommand{\bysame}{\leavevmode\hbox to3em{\hrulefill}\thinspace}
\providecommand{\MR}[1]{}
\renewcommand{\MR}[1]{%
}
\providecommand{\href}[2]{#2}
\newcommand{\articlehref}[2]{\href{#1}{#2}}


\begin{thebibliography}{99}
 \setlength{\itemsep}{0.3\baselineskip}

 \bibitem{BCDDPR}
 \textsc{M.~Bertolini, F.~Castella, H.~Darmon, S.~Dasgupta, K.~Prasanna, \& V.~Rotger},
 \articlehref{https://doi.org/10.1017/CBO9781107446335.004}{\emph{{$p$}-adic
   {$L$}-functions and {E}uler systems: a tale in two trilogies}}. In: \emph{Automorphic
 forms and {G}alois representations. {V}ol. 1}, London Math. Soc. Lecture Notes vol. 414, Cambridge Univ. Press, 2014, pp.~52--101. \MR{3444223}

 \bibitem{BBJ18}
 \textsc{R.~Boumasmoud, E.~H. Brooks, \& D.~P. Jetchev},
 \articlehref{http://dx.doi.org/10.1093/imrn/rny119}{\emph{Vertical
   distribution relations for special cycles on unitary {S}himura varieties}},
 Int. Math. Res. Not. \textbf{2018}, no.~06.

 \bibitem{brion87}
 \textsc{M.~Brion},
 \articlehref{http://www.numdam.org/item?id=CM_1987__63_2_189_0}{\emph{Classification
   des espaces homog\`enes sph\'{e}riques}}, Compos. Math. \textbf{63} (1987),
 no.~2, 189--208. \MR{906369}

 \bibitem{cauchi17}
 \textsc{A.~Cauchi}, \articlehref{https://doi.org/10.1142/S1793042120500244}{\emph{Norm-compatible
   systems of cohomology classes for {$\operatorname{GU}(2,2)$}}}, Int. J. Number Theory (in press), published online September 2019.

 \bibitem{cauchirodrigues18}
 \textsc{A.~Cauchi \& J.~Rodrigues~Jacinto},
 \articlehref{http://arxiv.org/abs/1807.06512}{\emph{Norm-compatible systems
   of {G}alois cohomology classes for {$\operatorname{GSp}_6$}}}, preprint, 2018,
 \path{arXiv:1807.06512}.

 \bibitem{cornut}
 \textsc{C.~Cornut},
 \articlehref{https://webusers.imj-prg.fr/~christophe.cornut/papers/ESHT.pdf}{\emph{An
   {E}uler system of {H}eegner type}}, preprint, 2018.

 \bibitem{darmonrotger14}
 \textsc{H.~Darmon \& V.~Rotger},
 \articlehref{http://dx.doi.org/10.24033/asens.2227}{\emph{Diagonal cycles and
   {E}uler systems {I}: a {$p$}-adic {G}ross--{Z}agier formula}}, Ann. Sci.
 {\'E}cole Norm. Sup. (4) \textbf{47} (2014), no.~4, 779--832. \MR{3250064}

 \bibitem{deligne71}
 \textsc{P.~Deligne},
 \emph{Travaux de {S}himura ({E}xp. {N}o. 389)}. In: \emph{S\'eminaire
 {B}ourbaki, 23\`eme ann\'ee (1970/71)}, Lecture Notes in Math., vol. 244,
 Springer, 1971, pp.~123--165. \MR{0498581}

 \bibitem{DJR18}
 \textsc{M.~Dimitrov, F.~Januszewski, \& A.~Raghuram},
 \articlehref{http://arxiv.org/abs/1802.10064}{\emph{{$L$}-functions of
   {$\operatorname{GL}(2n)$}: {$p$}-adic properties and non-vanishing of
   twists}}, preprint, 2018, \path{arXiv:1802.10064}.

 \bibitem{emerton06}
 \textsc{M.~Emerton}, \articlehref{http://dx.doi.org/10.1007/s00222-005-0448-x}{\emph{On
   the interpolation of systems of eigenvalues attached to automorphic {H}ecke
   eigenforms}}, Invent. Math. \textbf{164} (2006), no.~1, 1--84. \MR{2207783}

 \bibitem{faltings05}
 \textsc{G.~Faltings},
 \articlehref{http://dx.doi.org/10.1007/0-8176-4447-4_8}{\emph{Arithmetic
   {E}isenstein classes on the {S}iegel space: some computations}}. In: \emph{Number
 fields and function fields---two parallel worlds}, Progr. Math., vol. 239,
 Birkh\"{a}user, Boston, MA, 2005, pp.~133--166. \MR{2176590}

 \bibitem{grahamshah20}
 \textsc{A.~Graham \& S.~W.~A. Shah},
 \articlehref{http://arxiv.org/abs/2001.07825}{\emph{Anticyclotomic {E}uler
   systems for unitary groups}}, preprint, 2020, \path{arXiv:2001.07825}.

 \bibitem{januszewski}
 \textsc{F.~Januszewski},
 \articlehref{http://dx.doi.org/10.1515/CRELLE.2011.018}{\emph{Modular symbols
   for reductive groups and {$p$}-adic {R}ankin-{S}elberg convolutions over
   number fields}}, J. Reine Angew. Math. \textbf{653} (2011), 1--45.
 \MR{2794624}

 \bibitem{JLZ}
 \textsc{D.~Jetchev, D.~Loeffler, \& S.~L. Zerbes},
 \articlehref{http://arxiv.org/abs/1906.09196}{\emph{Heegner points in
   {C}oleman families}}, preprint, 2019, \path{arXiv:1906.09196}.

 \bibitem{kato04}
 \textsc{K.~Kato},
 \articlehref{http://smf4.emath.fr/en/Publications/Asterisque/2004/295/html/smf_ast_295_117-290.html}{\emph{{$P$}-adic
   {H}odge theory and values of zeta functions of modular forms}}. In:
  \emph{Cohomologies $p$-adiques et applications arithm{\'e}tiques (III)},
 Ast{\'e}risque \textbf{295} (2004), pp.~117--290. \MR{2104361}

 \bibitem{kms}
 \textsc{D.~Kazhdan, B.~Mazur, \& C.-G. Schmidt},
 \articlehref{http://dx.doi.org/10.1515/crll.2000.019}{\emph{Relative modular
   symbols and {R}ankin-{S}elberg convolutions}}, J. Reine Angew. Math.
 \textbf{519} (2000), 97--141. \MR{1739728}

 \bibitem{LLZ}
 \textsc{A.~Lei, D.~Loeffler, \& S.~L. Zerbes},
 \articlehref{http://dx.doi.org/10.4007/annals.2014.180.2.6}{\emph{Euler
   systems for {R}ankin--{S}elberg convolutions of modular forms}}, Ann. of
 Math. (2) \textbf{180} (2014), no.~2, 653--771. \MR{3224721}

 \bibitem{leiloefflerzerbes18}
 \bysame, \articlehref{http://dx.doi.org/10.1017/fms.2018.23}{\emph{Euler
   systems for {H}ilbert modular surfaces}}, Forum Math. Sigma \textbf{6}
 (2018), no.~e23.

 \bibitem{LSZ17}
 \textsc{D.~Loeffler, C.~Skinner, \& S.~L. Zerbes},
 \articlehref{http://arxiv.org/abs/1706.00201}{\emph{Euler systems for
   {$\operatorname{GSp}(4)$}}}, to appear in J. Eur. Math. Soc., 2017,
 \path{arXiv:1706.00201}.

 \bibitem{loefflerwilliams18}
 \textsc{D.~Loeffler \& C.~Williams},
 \articlehref{http://arxiv.org/abs/1802.08207}{\emph{P-adic {A}sai
   {L}-functions of {B}ianchi modular forms}}, to appear in Algebra \& Number Theory, 2018,
 \path{arXiv:1802.08207}.

 \bibitem{loefflerzerbes16}
 \textsc{D.~Loeffler \& S.~L. Zerbes},
 \articlehref{http://dx.doi.org/10.1186/s40687-016-0077-6}{\emph{Rankin--{E}isenstein
   classes in {C}oleman families}}, Res. Math. Sci. \textbf{3} (2016), no.~29,
 special collection in honour of Robert F.\ Coleman. \MR{3552987}

 \bibitem{milne}
 \textsc{J.S.~Milne},
 \articlehref{http://www.jmilne.org/math/xnotes/svi.html}{\emph{Introduction to Shimura varieties}}. In: \emph{Harmonic analysis, the trace formula, and Shimura varieties},
 Clay Math. Proc. vol. 4, Amer. Math. Soc., 2005, pp.~265–378.

 \bibitem{webb}
 \textsc{P. Webb},
\articlehref{http://doi.org/10.1016/S1570-7954(00)80044-3}{\emph{A guide to Mackey functors}}. In: \emph{Handbook of Algebra. Vol. 2},  Elsevier/North-Holland, Amsterdam, 2000, pp. 805--836.

\end{thebibliography}
\end{document}